\theoremstyle{plain}
 \newtheorem{thm}{Theorem}[section]
 \newtheorem{lemma}[thm]{Lemma}
 \newtheorem{prop}[thm]{Proposition}
 \newtheorem{claim}[thm]{Claim}
\theoremstyle{definition}
  \newtheorem{definition}[thm]{Definition}
\theoremstyle{remark}
  \newtheorem{rem}[thm]{Remark}
  \newtheorem{example}[thm]{Example}
\newcommand{\abs}[1]{\lvert#1\rvert}
\newcommand{\acts}{\curvearrowright}
\newcommand{\G}{\mathcal{G}}
\newcommand{\h}{\mathcal{H}}
\newcommand{\K}{\mathcal{K}}
\begin{document}
\title{Cost of Inner Amenable Groupoids}

\author{Robin Tucker-Drob}
\author{Konrad Wr\'{o}bel}

\begin{abstract}
    Kida and Tucker-Drob recently extended the notion of inner amenability from countable groups to discrete p.m.p. groupoids. In this article, we show that inner amenable groupoids have ``fixed priced 1'' in the sense that every principal extension of an inner amenable groupoid has cost 1. This simultaneously generalizes and unifies two well known results on cost from the literature, namely, (1) a theorem of Kechris stating that every ergodic p.m.p.\ equivalence relation admitting a nontrivial asymptotically central sequence in its full group has cost 1, and (2) a theorem of Tucker-Drob stating that inner amenable groups have fixed price 1.
    \end{abstract}

\maketitle

\section{Introduction}
Cost is an $[0,\infty]$-valued invariant of p.m.p. orbit equivalence relations that was first introduced by Levitt\cite{Le95} and significantly developed by Gaboriau\cite{Ga98}\cite{Ga00}\cite{Ga02OE}. By work of Connes-Feldman-Weiss\cite{CFW81}, every aperiodic amenable equivalence relation has cost $1$. Kechris showed in (\cite{Kec10}, Theorem 8.1) that the existence of a nontrival asymptotically central sequence in the full group of an ergodic p.m.p.\ equivalence relation implies the equivalence relation has cost 1. More exposition on the many results of cost theory can be found in the surveys of Kechris and Miller\cite{KeMi04} and Furman\cite{Fu11}.

A discrete p.m.p.\ groupoid $\G$ is said to have fixed price if every principal extension of $\G$ has the same cost. Gaboriau proved that free groups have fixed price and asked whether every group has fixed price. Since then, fixed price has been shown for several large classes of groups, including finite, infinite amenable\cite{OW80}, strongly treeable\cite{Ga00}, and inner amenable groups\cite{TD14} amongst many others. Recently, Hutchcroft and Pete showed that Kazhdan groups have a principal extension with cost 1\cite{HuPe18}, but it is an open question whether these groups have fixed price.

Inner amenable groups were first introduced by Effros\cite{Eff75} in relation with property Gamma of a von Neumann algebra. Examples of inner amenable groups include infinite amenable groups, groups with infinite center, and groups admitting a ergodic p.m.p.\ action which is stable in the sense of Jones and Schmidt \cite{JS}. Kerr and Tucker-Drob have shown that dynamical alternating groups associated to topologically free actions of amenable groups on the Cantor set are inner amenable, and they use this to exhibit uncountably many pairwise nonisomorphic, finitely generated simple nonamenable inner amenable groups\cite{KeTD19}. Recently, Kida and Tucker-Drob defined inner amenability for discrete p.m.p.\ groupoids\cite{KiTD18} and showed that the action groupoid associated to a compact p.m.p.\ action of an inner amenable group is inner amenable. They also note that not every free p.m.p. action of an inner amenable group gives an inner amenable action groupoid. In particular, the action groupoid associated to the Bernoulli shift of any nonamenable group is not inner amenable\cite[Corollary 6.3]{KiTD18}.

\begin{thm}\label{thm:cost1}
Assume $\varphi: R\rightarrow \G$ is a principal groupoid extension of an inner amenable groupoid $\G$. Then $C_{\mu^0_R}(R)=1$.
\end{thm}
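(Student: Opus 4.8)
The plan is to prove the two inequalities $C_{\mu^0_R}(R)\ge 1$ and $C_{\mu^0_R}(R)\ls 1$ separately, with all of the substance in the upper bound. For the lower bound, I would first observe that the nontriviality clause in inner amenability of $\G$ forces the conjugation--approximately--invariant vectors to carry definite mass off the unit space; transporting this through the homomorphism $\varphi$ shows that $R$ has no periodic part, i.e.\ $R$ is aperiodic. Levitt's theorem then gives $C_{\mu^0_R}(R)\ge 1$, so it remains to produce, for every $\ve>0$, a graphing of $R$ of cost $<1+\ve$.

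The first real step is to convert inner amenability of $\G$ into dynamical data on the extension $R$. Inner amenability supplies a sequence of nonnegative $\zeta_n\in L^1(\G,\mu^1)$ with $\int\zeta_n\,d\mu^1=1$ that is asymptotically invariant under the conjugation action of $\G$ on itself and does not concentrate on the units. Pulling $\zeta_n$ back along $\varphi$ and disintegrating over $R$, I would manufacture a nontrivial \emph{asymptotically central sequence} $S_n$ in the full pseudogroup $[[R]]$: a sequence with $\mu\{x:S_n(x)\ne x\}$ bounded below, with $S_n\to\mathrm{id}$ weakly, and with $\mu\{x:S_n\psi(x)\ne\psi S_n(x)\}\to 0$ for every $\psi\in[[R]]$. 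Here asymptotic conjugation--invariance of $\zeta_n$ becomes asymptotic commutation of $S_n$ with the partial isomorphisms of $R$, and non-concentration on the units becomes nontriviality of $S_n$. This is the groupoid analogue of the passage, in the group case, from a conjugation--invariant mean on $\Gamma\setminus\{e\}$ to an asymptotically central sequence in the full group of an orbit equivalence relation.

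The heart of the argument is then a \emph{spreading} lemma, reproving Kechris's cost-$1$ criterion in a form adapted to $R$. In its exact form it reads: if $T\in[R]$ is aperiodic and $\phi\in[[R]]$ commutes with $T$, then $\langle T,\phi\rangle=\langle T,\phi|_{D_0}\rangle$ for any complete $T$-section $D_0$ of $\mathrm{dom}(\phi)$; indeed commutation gives $\phi=T^{k}\phi T^{-k}$, so $\phi$ is recovered from $T$ and $\phi|_{D_0}$ rung by rung. As $T$ is aperiodic, $D_0$ may be chosen with $\mu(D_0)<\ve$, so $\phi$ is adjoined to any graphing already containing $T$ at cost $<\ve$. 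Since inner amenability yields only approximate commutation, I would run the quantitative version with the quantifiers in the right order: for a prescribed $\ve$ first fix a finite Rokhlin tower for an aperiodic $T$ of height $h>1/\ve$ whose base has measure $<\ve$ and which exhausts all but an $\ve$-fraction of the space, and only then pass far enough along $S_n$ that the relevant finitely many generators commute with $T$ to within $\delta\ll\ve/h$; the rung-by-rung reconstruction then agrees with $\phi$ off a set of measure $O(\ve+h\delta)$, repaired by a correction graphing of cost $O(\ve)$.

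Assembling this, I would fix an exhaustion of $R$ by finitely generated subrelations, and for each $\ve>0$ build a graphing consisting of one aperiodic $T\in[R]$ (cost $1$) together with small bases $\phi_i|_{D_i}$ and correction graphings of total cost $<\ve$, whence $C_{\mu^0_R}(R)<1+\ve$. The step I expect to be the main obstacle is producing, for a given finite family of generators, a single aperiodic $T\in[R]$ that approximately commutes with all of them: the sequence $S_n$ commutes with everything but need not be aperiodic, so one must inflate the approximate centralizer to contain an aperiodic transformation while preserving approximate commutation, using aperiodicity (and, after an ergodic-decomposition reduction, ergodicity) of $R$. The remaining friction is bookkeeping at the groupoid level: the $\mu^1$-normalization of $\zeta_n$ must be tracked through the disintegration so that ``weight off the units'' translates faithfully into ``measure of the support of $S_n$,'' and the correction graphings must be kept inside $R$ rather than the ambient $\G$.
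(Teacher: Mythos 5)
Your overall architecture (reduce to an upper bound of $1+\ve$ for each $\ve$, then spread generators along an aperiodic approximately-commuting element) is in the spirit of the right circle of ideas, and your ``spreading lemma'' is a correct rendering of the Kechris/Gaboriau commutation trick. The lower bound via aperiodicity is also fine, since condition (3) of inner amenability forces $\G$, and hence $R$, to be aperiodic. But there is a fatal gap at the very first substantive step: the passage from inner amenability of $\G$ to a nontrivial asymptotically central sequence $S_n$ in $[[R]]$. Inner amenability only provides asymptotically conjugation-invariant probability vectors $\zeta_n\in L^1(\G,\mu^1)$ (equivalently a conjugation-invariant diffuse equidistributed mean); these are not supported on graphs of partial transformations, and there is no general procedure to extract from them single partial bisections that asymptotically commute with all of $[[R]]$. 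This is not a technical inconvenience but a genuine obstruction: by the result of Kida and Tucker-Drob cited in the introduction, an equivalence relation admitting a nontrivial asymptotically central sequence in its full group is itself inner amenable as a groupoid, whereas the orbit equivalence relation of the Bernoulli shift of a nonamenable inner amenable group (e.g.\ $\mathbb{F}_2\times\mathbb{Z}$) is \emph{not} inner amenable as a groupoid, even though it is a principal extension of an inner amenable group. So for that $R$ the sequence $S_n$ you propose to manufacture provably does not exist, and your argument would reduce the theorem to a strictly stronger hypothesis that fails for some of the extensions the theorem must cover. (This is also why the theorem genuinely generalizes, rather than follows from, Kechris's cost-$1$ criterion.)

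The paper's proof circumvents this by never upgrading the mean to actual elements of the full group. Instead it takes a maximal ergodic amenable subgroupoid $\h'\leq\G$, enlarges it by a set of $\mu^1$-measure $<\ve$ to an ergodic nowhere amenable $\h$, and proves a structural theorem: inner amenability of $\G$ yields an intermediate groupoid $\K$ with $\h$ q-normal in $\K$ and $\K$ q-normal in $\G$. The approximate commutation is encoded in q-normality (aperiodicity of $\h\cap\h^\sigma$ for generating sections $\sigma$), which is exactly the ``rung-by-rung reconstruction'' hypothesis in a form that survives at the level of means, and which is inherited by preimages under the extension $\varphi$. Monotonicity of cost under q-normal inclusions then gives $C_{\mu^0_R}(R)\leq C_{\mu^0_R}(\varphi^{-1}(\h))<1+\ve$. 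If you want to salvage your outline, the fix is to replace ``asymptotically central sequence'' by ``q-normality chain'' throughout and to run the commutation argument against subgroupoids rather than against single transformations.
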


Specializing this to the case of countable groups recovers Tucker-Drob's result that inner amenable groups have fixed price 1, and specializing to the case of equivalence relations recovers Kechris's theorem that equivalence relations with a nontrivial asymptotically central sequence in their full group have cost 1 (since these equivalence relations are shown to be inner amenable in \cite{KiTD18}).

In order to prove Theorem \ref{thm:cost1}, we make heavy use of a generalization of von Neumann's notion of amenable actions, from the setting of groups to the setting of groupoids. We obtain the following key structural result along the way, generalizing a result from \cite{TD14} which only applied to groups.

\begin{thm}
If $\G$ is an inner amenable groupoid, and $\h \leq \G$ is a nowhere amenable subgroupoid, then there is a groupoid $\K$ such that $\h$ is q-normal in $\K$ and $\K$ is q-normal in $\G$.
\end{thm}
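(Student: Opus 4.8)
The plan is to run everything through the groupoid version of von Neumann's amenable actions, using inner amenability of $\G$ to produce an invariant mean and using nowhere amenability of $\h$ as the obstruction that forces this mean onto the q-normalizing arrows. First I would unpack inner amenability of $\G$ as the statement that the conjugation action $\G \acts \G_{*}$ on the bundle of nontrivial arrows $\G_{*} = \G \setminus \G^{0}$ is amenable, i.e.\ carries a $\G$-invariant field of means $m = (m_{x})_{x}$. Since $\h \le \G$, restricting the acting groupoid shows that $m$ is in particular invariant under the conjugation action of $\h$, whose stabilizer at an arrow $g$ is the centralizer $C_{\h}(g) = \{h \in \h : hg = gh\}$.

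The engine of the argument is the groupoid analogue of the classical fact that a group acting with an invariant mean and amenable stabilizers is itself amenable: if $\h$ admits an invariant mean on a bundle $Y$ and $C_{\h}(y)$ is amenable for $m$-a.e.\ $y$ over a positive-measure piece $A \subseteq X$, then $\h|_{A}$ is amenable. Since $\h$ is nowhere amenable, this rules out the amenable-stabilizer alternative on every positive-measure piece, so $m$ must concentrate on $B = \{g \in \G_{*} : C_{\h}(g) \text{ nowhere amenable}\}$. The key containment $C_{\h}(g) \le \h \cap g\h g^{-1}$ (if $hg = gh$ with $h \in \h$, then $h = g^{-1}hg \in g^{-1}\h g$, so $h \in \h \cap g\h g^{-1}$) then shows that $m$-a.e.\ arrow $g$ q-normalizes $\h$, in the sense that $\h \cap g\h g^{-1}$ is nowhere amenable. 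I would therefore set $\K := \langle Q(\h)\rangle$, the subgroupoid generated by the q-normalizing arrows $Q(\h) = \{g \in \G : \h \cap g\h g^{-1} \text{ nowhere amenable}\}$; note $\h \le \K$ since every arrow of $\h$ lies in $Q(\h)$.

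With this $\K$ the relation $\h \qnorm \K$ is immediate, as $\K$ is by definition generated by arrows of $\K$ that q-normalize $\h$. The substance of the theorem, and the step I expect to be the main obstacle, is establishing $\K \qnorm \G$, i.e.\ that the q-normalizers of $\K$ generate all of $\G$; this is exactly where inner amenability must be used essentially, since the conclusion fails for free subgroups of free groups. The intended mechanism is to exploit that $m$ is invariant under conjugation by all of $\G$, not merely by $\h$: for an arbitrary $g_{0} \in \G$, conjugation by $g_{0}$ carries $Q(\h)$ bijectively onto $Q(g_{0}\h g_{0}^{-1})$, since $g_0\h g_0^{-1} \cap (g_0 g g_0^{-1})(g_0\h g_0^{-1})(g_0 g g_0^{-1})^{-1} = g_{0}(\h \cap g\h g^{-1})g_{0}^{-1}$. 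Hence $m$ concentrates simultaneously on $Q(\h)$ and on $g_{0}Q(\h)g_{0}^{-1}$, and so on their intersection, which lies in $\K \cap g_{0}\K g_{0}^{-1}$. The task is then to upgrade ``$\K \cap g_{0}\K g_{0}^{-1}$ carries the diffuse invariant mean $m$'' to ``$\K \cap g_{0}\K g_{0}^{-1}$ is nowhere amenable,'' which would place every $g_{0}$ in $Q(\K)$ and finish the proof. Closing this last gap --- passing from largeness of a set of arrows under $m$ to nonamenability of the generated subgroupoid --- is the delicate point, and I expect it to require a second application of the amenable-stabilizer dichotomy, now relative to $\K$, together with a measurable exhaustion argument to pass between positive-measure pieces and the full base.
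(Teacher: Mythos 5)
Your architecture is essentially the paper's: extract the conjugation-invariant diffuse mean from inner amenability, use the nowhere amenability of $\h$ together with a groupoid version of the amenable-stabilizer lemma to force the mean onto the set of elements q-normalizing $\h$, let $\K$ be the subgroupoid these generate, and then use conjugation invariance of the mean to handle $\K\qnorm\G$. Two points, however. First, the step you defer with ``the amenable-stabilizer dichotomy rules out the alternative on every positive-measure piece, so $m$ concentrates on $B$'' is where nearly all of the work lives, and your pointwise formulation does not literally parse for groupoids: for a single arrow $g$, the set $g\h g^{-1}$ is a subgroup of one isotropy group, not a subgroupoid over a positive-measure base, so ``$\h\cap g\h g^{-1}$ nowhere amenable'' is not yet a meaningful condition, and neither is ``$C_\h(y)$ amenable for $m$-a.e.\ $y$.'' The paper repairs this by replacing points with partial measurable sections: it decomposes $\G$ into conjugation orbits of a countable family $\Sigma$ of partial sections of $r$, attaches to each $\sigma\in\Sigma$ the maximal base set $A_\sigma$ on which $\h\cap\h^\sigma$ is nowhere amenable, sets $\K:=\langle\h\cup\{\sigma\cap s^{-1}(A_\sigma)\}\rangle$, and then argues by contradiction: if $m(\G\setminus\K)>0$, a Radon--Nikodym restriction and renormalization produces an equidistributed conjugation-invariant mean on a piece of $\G\setminus\K$, whence the conjugation action there is amenable; since left translation by $\h$ is nowhere amenable, the section version of the folklore lemma forces some stabilizer $\tilde\h_\tau$ to be nonamenable, and $\varphi(\tilde\h_\tau)\subseteq\h\cap\h^\tau$ then contradicts the maximality of $A_\sigma$. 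You would need all of this machinery (including the auxiliary action groupoid $\tilde\h=H\ltimes\G^0$ that makes conjugation an honest groupoid action) to make your concentration claim rigorous.

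Second, the gap you flag at the end --- upgrading ``$\K\cap g_0\K g_0^{-1}$ carries the diffuse invariant mean'' to ``$\K\cap g_0\K g_0^{-1}$ is nowhere amenable'' --- is a self-inflicted one: q-normality as defined here only requires that $\K^\sigma\cap\K$ be \emph{aperiodic} on $s(\sigma)$ for a generating family of sections, not nowhere amenable. Once $m(\K)=1$ is established, conjugation invariance gives $m(\K\cap\K^{\sigma_1}\cap\dots\cap\K^{\sigma_n})=1$ for all bisections, and since $m$ is diffuse (it vanishes on sets of finite $\mu^1$-measure) and equidistributed, this intersection must meet almost every fiber of $s$ in an infinite set, which is exactly aperiodicity. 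No second application of the dichotomy and no exhaustion argument is needed for this step; the genuinely hard part of the proof is the one you sketched in a sentence, not the one you flagged.
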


\section{Preliminaries}
\subsection{Groupoids}
\begin{definition}
A \textbf{groupoid} $\G$ is a small category in which every morphism is an isomorphism. We refer to the set of objects as the unit space, written as $\G^0$. There are source $s_\G:\G\rightarrow \G^0$ and range $r_\G:\G \rightarrow \G^0$ maps that send an element of the groupoid to its source and range, respectively, and an inclusion map $i_\G : \G ^0 \rightarrow \G$, that sends a unit to the identity morphism at that unit. When there is no confusion, we will drop the subscripts on $s$, $r$, and $i$, and we will identify $\G ^0$ with its image in $\G$ under $i$.

We say a groupoid $\G$ is \textbf{principal} if the map $g\mapsto (r(g),s(g))$ is injective.
\end{definition}

An equivalence relation $R$ on a set $X$ is naturally a principal groupoid, with unit space $X$, source and range maps the right and left projections respectively, and composition given by the rule $(z,y)(y,x)=(z,x)$. Moreover, each principal groupoid is naturally isomorphic as a groupoid to an equivalence relation via the map $g\mapsto (r(g),s(g))$. Given this transparent equivalence of categories, we will freely and frequently identify principal groupoids with their associated equivalence relation.
%

\begin{definition}
A \textbf{discrete Borel groupoid} is a groupoid where both $\G$ and $\G^0$ are standard Borel spaces, the source, range, and inclusion maps $s$, $r$, and $i$ are all Borel, $s$ and $r$ are countable-to-one, and the multiplication and inverse maps are Borel.
\end{definition}

\begin{definition}
A \textbf{discrete p.m.p. groupoid} is a pair $(\G,\mu^0_\G)$ where $\G$ is a discrete Borel groupoid and $\mu^0_\G$ is a Borel probability measure on $\G^0$ satisfying $\int_{\G^0} c_x^r d\mu^0_\G=\int_{\G^0} c_x^s d\mu^0_\G$ where $c_x^r$ and $c_x^s$ refer to the counting measure on $r^{-1}(x)$ and $s^{-1}(x)$ respectively. Set $\mu^1_\G:=\int_{\G^0} c_x^r d\mu^0_\G=\int_{\G^0} c_x^s d\mu^0_\G$ to be this $\sigma$-finite measure on $\G$.
\end{definition}

Again, we will drop the subscript on the measures when there is no cause for confusion.

\begin{definition}
A subset $A\subseteq \G^0$ is said to be $\G$-\textbf{invariant} if $\mu^0(\G\cdot A \triangle A)=0$ where $\G\cdot A=\{x\in \G^0\: |\:\exists g\in \G\text{ with }s(g)\in A \text{ and }r(g)=x\}$.
\end{definition}

\begin{definition}
A discrete p.m.p. groupoid $\G$ is called \textbf{ergodic} if every $\G$-invariant subset $A\subseteq \G^0$ is $\mu^0$-null or conull.
\end{definition}

\begin{example}
Let $G\acts (X,\mu)$ be a p.m.p. action of a countable group on a standard probability space $(X,\mu )$. We define the discrete p.m.p. groupoid $\G = G\ltimes X$ with underlying set $G\times X$ and unit space $\G^0=X$, with the groupoid operation $(g,h\cdot x)(h,x):=(gh,x)$. A groupoid that arises through such a process is called an \textbf{action groupoid}. If this action is ergodic, so is the groupoid it generates.
\end{example}

\begin{definition}
An \textbf{extension} of a discrete p.m.p.\ groupoid $\G$ is a discrete p.m.p.\ groupoid $\h$ together with a measure preserving groupoid homomorphism $\phi: (\h , \mu _{\h}^1) \rightarrow (\G ,\mu _{\G}^1)$. We call this a \textbf{principal extension} of $\G$ if the groupoid $\h$ is principal.
\end{definition}

As explained in \cite{BTD}, the category of extensions of $\G$ is equivalent to the category of p.m.p. actions of $\G$.

\begin{example}
Let $G\acts (X,\mu)$ be a free p.m.p. action. Then the map $\phi:G\ltimes (X,\mu)\rightarrow G$ defined by $(g,x)\mapsto g$ is a principal groupoid extension.
\end{example}
More detail about groupoid extensions can be found in \cite{BTD} and \cite{KiTD18}.

\begin{definition}
A \textbf{measurable bisection} of a discrete p.m.p. groupoid $\G$ is a Borel subset $\sigma$ of $\G$ such that the restrictions $r|_\sigma$ and $s|_\sigma$ are each bijections of $\sigma$ with a conull subset of $\G^0$. A subset $\sigma$ of $\G$ is called a \textbf{partial measurable bisection} if $r|_{\sigma},s|_\sigma$ are only assumed to be injections.
\end{definition}

\begin{definition}
The \textbf{full group} of a discrete p.m.p. groupoid $\G$ is the set, denoted by $[\G]$, of all measurable bisections. The \textbf{pseudogroup} of $\G$ is the set, denoted by $[[\G]]$, of all partial bisections. We identify two partial bisections $\sigma_1$ and $\sigma_2$ if their symmetric difference $\sigma_1\triangle\sigma_2$ is $\mu^1$-null. The full group admits a complete separable metric, namely $d(\sigma_1,\sigma_2):=\mu^1(\sigma_1\triangle\sigma_2)$.
\end{definition}

For subsets $A,B\subseteq \G$ define $A^{-1}:=\{ g^{-1} | g\in A \}$ and $AB := \{ gh | g\in A,\ h\in B,\text{ and }s(g)=r(h)\}$. The full group and the full pseudogroup of $\G$ are then a group and inverse semigroup respectively, under these operations. For $g\in \G$ and $A\subset \G$ we also define $gA := \{ g\} A$.

For a groupoid $\G$, and subset $A\subseteq \G^0$, we let $\G_A:=\{g\in \G \: \vert \: r(g),s(g)\in A\}$.
Fix a partial measurable bisection $\sigma \in [[\G]]$. For $g\in \G_{r(\sigma)}$, define the conjugate of $g$ by $\sigma$, denoted $g^{\sigma}$, to be the unique element of $\sigma^{-1} g\sigma$. Likewise, for $D\subseteq \G$, define $D^\sigma=\{ g^\sigma\:|\: g\in D \}$. For a function $f: \G \rightarrow \mathbb{C}$, define $f^{\sigma}: \G \rightarrow \mathbb{C}$ by

\[ f^\sigma(g)=
\begin{cases}
f(g^{\sigma^{-1}}) & \text{ if } g \in \G_{s(\sigma)} \\
0 & \text{ otherwise}
\end{cases}
\]

\subsection{Actions of Groupoids and Amenability}

\begin{definition}
A \textbf{locally countable fibered space} over a standard measure space $(X,\mu)$ consists of a standard Borel space $W$ along with a countable-to-one Borel map $p: W\rightarrow X$. 
For $A\subseteq X$ we define $W^A:=p^{-1}(A)$ and set $W^x:=W^{\{x\}}$. We also define $\nu(A):=\int_X |W^x\cap A|d\mu$.

If $\G$ is a discrete p.m.p.\ groupoid and $W$ is a locally countable fibered space over $\G ^0$, then we define $\G \ast W:=\{(g,w)\in \G\times W \: |\: s(g)=p(w)\}$.
\end{definition}

Note that a discrete p.m.p. groupoid, together with either its source or range map, is a locally countable fibered space over $\G^0$, with $\nu_r=\nu_s=\mu^1$.

\begin{definition}
A \textbf{(left) Borel action of a p.m.p. groupoid $\G$ on a locally countable fibered space} $p: W\rightarrow \G^0$ is a Borel map $\alpha : \G\ast W \rightarrow W$, such that
\begin{itemize}
\item[(1)] $\alpha (g,w)\in W^{r(g)}$ for each $g\in \G$ and $w\in W^{s(g)}$,
\item[(2)] for each $g\in \G$ the map $\alpha _g : w\mapsto \alpha (g,w)$, is a bijection from $W^{s(g)}$ to $W^{r(g)}$, and
\item[(3)] $\alpha _g\alpha _h = \alpha _{gh}$ whenever $s(g)=r(h)$
\end{itemize}
where we denote $\G\ast W = \{ (g,w) | s(g)=p(w) \}$.

We will also simply write $gw$ for $\alpha (g,w)$. For subsets $A\subseteq \G$ and $V\subseteq W$, denote $AV:=\left\{ gw\:| \: g\in A \text{ and } w\in V\right\}$. For $g\in \G$ and $V\subseteq W$, we define $gV := \{ g \} V$.
\end{definition}

\begin{example}
The \textbf{left translation action} $\lambda$, of a groupoid $\G$ on itself is defined, for $g,h\in \G\ast\G = \{ (g,h) | s(g)=r(h) \}$, by $\lambda (g,h):=gh$.
\end{example}


\begin{definition}
A \textbf{measurable section} of a locally countable fibered space $p:W\rightarrow (X,\mu)$ is a Borel subset $\sigma$ of $W$ such that the restriction $p|_\sigma$, of $p$ to $\sigma$, is a bijection of $\sigma$ with a conull subset of $X$. A subset $\sigma$ of $W$ is called a \textbf{partial measurable section} if $p|_{\sigma}$ is only assumed to be injective.
\end{definition}

Suppose we have an action of a discrete p.m.p.\ groupoid $\G$ on a locally countable fibered space $p: W\rightarrow \G^0$. Let $g\in \G$ and let $\sigma \subseteq W$ be a partial measurable section. We say that $g$ fixes $\sigma$ if $\varnothing \neq g\sigma\subseteq \sigma$. Notice that, in this case, the set $g\sigma$ contains a single point, so we will abuse notation and use $g\sigma$ to also denote this point. 

\begin{definition}
The \textbf{stabilizer} of a partial section $\sigma$ is defined to be the set
\[
\G_\sigma:=\{g\in\G \: | \: g \text{ fixes } \sigma\}=\{g\:|\:\varnothing\neq g\sigma\subseteq\sigma\}.
\]
\end{definition}

\begin{definition}
Let $\G \acts W$ be a Borel action of a discrete p.m.p. groupoid $\G$ on a locally countable fibered space $W$ over $\G ^0$. The action is called \textbf{amenable} if there exists a sequence of Borel functions $(f_n)_{n\in \mathbb{N}}:W\rightarrow [0,1]$ such that

\begin{enumerate}
   \item $\sum\limits_{w\in W^x} f_n(w)=1$ for $\mu^0$-almost every $x\in\G^0$ and all $n\in \mathbb{N}$ \label{amen1} 
   \item $\sum \limits_{w\in W^{r(g)}} |f_n (w) - f_n (g^{-1}w)| \rightarrow 0$ as $n\rightarrow \infty$ for $\mu^1$-almost every $g\in\G$. \label{amen2} 
\end{enumerate}
\end{definition}

Notice that this generalizes von Neumann's notion of amenable action when $\G$ is actually a group.

\begin{rem} \label{invrmk}
If a sequence $f_n$ is $\G$-asymptotically invariant as in item (2), then, by the bounded convergence theorem, it is asymptotically invariant under the full group, i.e. $\|f_n-\sigma f_n\|_1\rightarrow 0$ for $\sigma\in[\G]$. Conversely, if a sequence is asymptotically invariant under the full group, then a subsequence is $\G$-asymptotically invariant as in item (2).
\end{rem}

\begin{definition}
A \textbf{mean} on $(W,\nu)$ is a norm one positive linear functional on $L^\infty(W,\nu)$. 
\end{definition}

We'll often treat means as finitely additive probability measures by letting $m(A):=m(1_A)$ for $A\subseteq W$ a $\nu$-measurable subset.

\begin{definition}
Let $m\in (L^\infty(W,\nu))^*$ be a mean on a fibered space $p:W\rightarrow \G^0$. We say that $m$ is \textbf{equidistributed} if for every measurable set $A\subseteq \G^0$,
\[m(W^A)=\mu^0(A).\]
\end{definition}

\begin{prop}\label{eqamen}
Let $\G$ be a discrete p.m.p. groupoid that acts on the fibered space $p:W\rightarrow \G^0$. Assume the action admits an equidistributed mean $m\in (L^\infty(W,\nu))^*$ such that $m(\sigma A)=m(A)$ for every measurable set $A$ and $\sigma \in [\G]$. Then the action $\G\acts W$ is amenable.
\end{prop}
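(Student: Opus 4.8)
The plan is to run a fibered version of Day's convexity argument, converting the invariant equidistributed mean $m$ into a genuine sequence of per-fiber probability densities that is asymptotically $[\G]$-invariant. Let $\calp\subseteq L^1(W,\nu)$ denote the convex set of Borel functions $f\colon W\to[0,1]$ with $\sum_{w\in W^x}f(w)=1$ for $\mu^0$-a.e.\ $x$; these are exactly the candidates for an amenability sequence (condition (1) is built in, and $f\ge 0$ together with the fiber-sum condition forces $f(w)\in[0,1]$). Equidistribution guarantees a.e.\ fiber is nonempty, so a measurable selection via Lusin--Novikov shows $\calp\ne\varnothing$. Each $f\in\calp$, viewed as the functional $\phi\mapsto\int_W f\phi\,d\nu$, is an equidistributed mean, since $\int_W f\cdot 1_{W^A}\,d\nu=\int_A\sum_{w\in W^x}f(w)\,d\mu^0=\mu^0(A)$. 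The full group $[\G]$ acts on $W$ (a bisection $\sigma$ sends $w\in W^x$ to $gw$ for the unique $g\in\sigma$ with $s(g)=x$), this action preserves $\nu$ and leaves $\calp$ invariant, and a direct estimate gives $\|\sigma f-f\|_1\le d(\sigma,\mathrm{id})$ for every $f\in\calp$, so the orbit maps $\sigma\mapsto\sigma f$ are uniformly Lipschitz on $\calp$.

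The first real step is to show $m\in\overline{\calp}^{\,w*}$, the weak-$*$ closure of $\calp$ in $(L^\infty(W,\nu))^*$. If not, Hahn--Banach separation in the weak-$*$ topology (whose continuous dual is $L^\infty$) yields $\phi\in L^\infty(W,\nu)$ with $\mathrm{Re}\,m(\phi)>\sup_{f\in\calp}\mathrm{Re}\int_W f\phi\,d\nu$. Since fibers are countable, a measurable selection of near-maximizers shows the right-hand supremum equals $\int_{\G^0}h\,d\mu^0$, where $h(x):=\sup_{w\in W^x}\mathrm{Re}\,\phi(w)$. On the other hand $\mathrm{Re}\,\phi\le h\circ p$ holds $\nu$-a.e., and equidistribution gives $m(h\circ p)=\int_{\G^0}h\,d\mu^0$ for every pulled-back function (extend $m(1_{W^A})=\mu^0(A)$ by linearity and boundedness), whence $\mathrm{Re}\,m(\phi)\le\int_{\G^0}h\,d\mu^0$, a contradiction.

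The second step is Day's trick. Fix a finite set $F=\{\sigma_1,\dots,\sigma_k\}\subseteq[\G]$ and the linear map $T\colon L^1\to(L^1)^k$, $T(f)=(\sigma_i f-f)_i$; then $T(\calp)$ is convex, so its weak and norm closures coincide by Mazur. If $0\notin\overline{T(\calp)}$, separation produces $(\phi_i)\in(L^\infty)^k$ and $\delta>0$ with $\mathrm{Re}\int_W f\psi\,d\nu\ge\delta$ for all $f\in\calp$, where $\psi=\sum_i(\sigma_i^{-1}\phi_i-\phi_i)$ (using that the adjoint of $\sigma_i$ on $L^\infty$ is $\sigma_i^{-1}$). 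Passing to the weak-$*$ limit along a net $\calp\ni f_\alpha\to m$ gives $\mathrm{Re}\,m(\psi)\ge\delta$, but the $[\G]$-invariance hypothesis $m(\sigma A)=m(A)$ (equivalently $m(\sigma\phi)=m(\phi)$) forces $m(\psi)=0$, a contradiction. Hence for each finite $F$ and each $\varepsilon>0$ there is $f\in\calp$ with $\max_i\|\sigma_i f-f\|_1<\varepsilon$.

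Finally, apply this to an increasing exhaustion of a fixed countable $d$-dense subset of $[\G]$ with $\varepsilon\to 0$ to extract a single sequence $(f_n)\subseteq\calp$ with $\|\sigma f_n-f_n\|_1\to 0$ for every $\sigma$ in that dense set; the uniform Lipschitz bound from the first paragraph upgrades this to $\|\sigma f_n-f_n\|_1\to 0$ for all $\sigma\in[\G]$. Each $f_n$ satisfies condition (1) by construction, and by Remark \ref{invrmk} a subsequence of $(f_n)$ satisfies the pointwise asymptotic invariance (2), witnessing amenability of $\G\acts W$. I expect the crux to be the weak-$*$ density statement $m\in\overline{\calp}^{\,w*}$ --- that the finitely additive invariant mean is genuinely approximable by honest per-fiber densities --- since it is there that equidistribution and the countable-fiber measurable selection must be combined; the subsequent Day--Mazur convexity argument is then routine once $[\G]$-invariance of $m$ is in hand.
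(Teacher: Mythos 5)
Your proof is correct, and while it shares the overall Day--Mazur skeleton with the paper's argument (convexity, Hahn--Banach separation against the invariant mean, diagonalization over a countable dense subset of $[\G]$, and the passage from full-group invariance to pointwise asymptotic invariance via Remark~\ref{invrmk}), it handles the normalization constraint in a genuinely different way. The paper works with arbitrary non-negative unit vectors $f \in L^1(W,\nu)$, appends the defect $p_f - 1_{\G^0}$ as an extra coordinate of the convex set being separated, and then invokes a transfinite-induction ``repair'' claim redistributing mass between fibers to convert an approximate solution into one with $p_f = 1_{\G^0}$ exactly, at an $L^1$-cost equal to $\|p_f - 1_{\G^0}\|_1$. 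You instead restrict from the outset to the convex set $\calp$ of exact fiber-densities and prove the separate statement $m \in \overline{\calp}^{\,w*}$, via the identity $\sup_{f\in\calp}\mathrm{Re}\int_W f\phi\,d\nu = \int_{\G^0}\bigl(\sup_{w\in W^x}\mathrm{Re}\,\phi(w)\bigr)\,d\mu^0$ (a Lusin--Novikov selection for the lower bound) together with $m(h\circ p)=\int_{\G^0}h\,d\mu^0$ from equidistribution. Your route buys a cleaner argument that avoids the transfinite induction entirely and isolates exactly where equidistribution is used --- namely in showing that the mean is approximable by honest per-fiber densities --- at the price of a second separation argument; the paper's route keeps everything inside a single application of Hahn--Banach but pays for it with the mass-redistribution claim. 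Both are complete; the only small points to make explicit in a final write-up are that $\nu$-a.e.\ fiber is nonempty (so $\calp\neq\varnothing$, which you correctly deduce from equidistribution), that the $[\G]$-action on $W$ preserves $\nu$ (needed for the adjoint computation $\int(\sigma f)\phi\,d\nu=\int f(\sigma^{-1}\phi)\,d\nu$), and that the fiberwise supremum $h$ is measurable (again by decomposing $W$ into countably many partial sections).
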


\begin{proof}

Associate to every non-negative unit vector $f\in L^1(W,\nu)$ a function $p_f\in L^1(\G^0,\mu^0)$ defined by 
\[p_f(x):=\sum_{w\in W^x}f(w).\]
Since $m$ is equidistributed, given a net $(f_i)$ converging to $m$, the net $(p_{f_i})$ weak-* converges to the function $1$.

The proof of the following claim follows the proof given in \cite[Claim 3.19]{KiTD18}.

\begin{claim}
Let $f\in L^1(W,\nu)$ be a non-negative function with $\|f\|_1=1$. Then there exists a non-negative $g\in L^1(W,\nu)$ which satisfies $p_g=1_{\G^0}$ and $\|f-g\|_1 = \|p_f-1_{\G^0}\|_1$.
\end{claim}

\begin{proof}[Proof of Claim]
Let $f_0 := f$. We proceed by transfinite induction on countable ordinals $\alpha$ to define
a non-negative function $f_\alpha\in L^1(W,\nu)$ satisfying, for all $\beta<\alpha$:
\begin{enumerate}
    \item $\|f_\beta -f_\alpha\|_1 = \|p_{f_\alpha}-p_{f_\beta}\|_1$
    \item For almost every $x\in\G^0$, if $p_{f_\beta}(x)\leq 1$, then $p_{f_\beta}(x)\leq p_{f_\alpha}(x)\leq 1$
    \item For almost every $x\in\G^0$, if $p_{f_\beta}(x)\geq 1$, then $p_{f_\beta}(x)\geq p_{f_\alpha}(x)\geq 1$
    \item If $\|p_{f_\beta}- 1_{\G^0}\|_1 > 0$, then $\|p_{f_\alpha} - 1_{\G^0} \|_1 < \|p_{f_\beta} -1_{\G^0}\|_1$, and if $p_{f_\beta} = 1_{\G^0}$, then $f_\alpha=f_\beta$.
\end{enumerate}

If $\alpha$ is a limit ordinal, take an increasing sequence $\beta_1<\beta_2<\dots$ such that $\alpha=\sup_i \beta_i$. The sequence $(p_{f_{\beta_i}})_i\subseteq L^1(\G^0,\mu^0)$ is Cauchy by properties (2) and (3). By property (1), the sequence $(f_{\beta_i})_i\subseteq L^1(W,\nu)$ is Cauchy. Let $f_\alpha$ be the limit point of this sequence. 

If $\alpha$ is a successor ordinal and $p_{f_{\alpha-1}}=1_{\G^0}$, then let $f_\alpha:=f_{\alpha-1}$. 

If $\alpha$ is a successor ordinal and $p_{f_{\alpha-1}}\not=1_{\G^0}$, then there exists $\varepsilon>0$ such that the sets $A_0:=\{x| p_{f_{\alpha-1}}(x)<1-\varepsilon\}$ and $A_1:=\{x| p_{f_{\alpha-1}}(x)>1+\varepsilon\}$ both have positive measure. For $i\in \{0,1\}$, find partial measurable sections $C_i\subseteq W^{A_i}$ with $\nu(C_0)=\nu(C_1)>0$ and $\varepsilon':=\inf\{f_{\alpha-1}|_{C_1}\}>0$. By letting $\delta:=\min\{\varepsilon, \varepsilon'\}$ and $f_\alpha:=f_{\alpha-1}+\delta(1_{C_0}-1_{C_1})$, we get a function with the required properties.

By property (4), there exists a countable ordinal $\kappa$ such that $p_{f_\kappa}=1_{\G^0}$, and setting $g:=f_\kappa$ finishes the proof of the claim.
\end{proof}

Now, for a fixed finite collection $\Delta\subseteq [\G]$ and $\varepsilon>0$, we consider the convex set $\{(f-\delta f)_{\delta\in \Delta}\times (p_f-1_{\G^0})\:|\: \|f\|_1=1, f\geq 0\}\subset L^1(W,\nu)^\Delta\times L^1(\G^0,\mu^0)$. By the Hahn-Banach theorem, this set contains $0$ in its weak closure. So, by Mazur's Theorem, $0$ is in the norm closure of our set.

By the claim, for every finite collection of sections $\Delta \subseteq [\G]$ and $\varepsilon>0$, there exists a positive norm one function $f$ such that $p_f=1_{\G^0}$ and $\max_{\delta \in \Delta}\|f-\delta f\|_{1}\leq \varepsilon$. Fix a countable dense subset $(\delta_k)\subseteq [\G]$. Take $f_n$ as above satisfying $\max_{1\leq k\leq n}\|f_n-\delta_k f_n\|_{1}\leq \frac{1}{n}$ and $p_{f_n}=1_{\G^0}$. The functions $f_n$ satisfy item (1) in the definition of amenability because $p_{f_n}=1_{\G^0}$. 

Let $\delta\in[\G]$ and $\varepsilon>0$. There exists $I$ such that $\mu^1(\delta\triangle \delta_I)<\frac{\varepsilon}{4}$ since $(\delta_k)$ is dense. Let $N>\max(\frac{2}{\varepsilon},I)$. For $n>N$,
\[
\|f_n-\delta f_n\|_{1} \leq \|f_n-\delta_I f_n\|_{1} + 2\mu^1(\delta\triangle \delta_I)<\frac{\varepsilon}{2}+2\frac{\varepsilon}{4}<\varepsilon.
\]
By ~\cref{invrmk}, a subsequence of $(f_n)$ satisfies item (2) in the definition of amenability and hence the action $\G\acts W$ is amenable.
\end{proof}


\begin{definition}
A discrete p.m.p. groupoid $\G$ is called \textbf{amenable} if the left translation action of $\G$ on itself is amenable.
\end{definition}

In the case when $\G$ is an equivalence relation, this corresponds with the notion of amenability in the category of equivalence relations\cite{KeMi04}. A study of amenable groupoids can be found in \cite{AnRe00}.

\begin{definition}
A groupoid $\G$ is called \textbf{nowhere amenable} if for every positive measure subset $A\subseteq \G^0$, the groupoid $\G_{A}=\{ g\in \G \:|\: s(g),r(g)\in A \}$ is nonamenable.
\end{definition}

In \cite{KiTD18}, Kida and Tucker-Drob introduced the following generalization of inner amenable groups.

\begin{definition}\label{iadef}
A discrete p.m.p. groupoid $\G$ is called \textbf{inner amenable} if there exists a mean $m\in (L^\infty(\G,\mu^1))^*$ such that
\begin{enumerate}
   \item $m(\G_{A})=\mu^0(A)$ for every $\mu^0$-measurable $A\subseteq \G^0$\label{iamean1}
   \item $m(A^\sigma)=m(A)$ for every $\mu^0$-measurable $A\subseteq \G^0$ for every $\sigma \in [\G]$\label{iamean2}
   \item $m(D)=0$ for every $\mu^1$-measurable $D\subseteq \G$ with $\mu^1(D)<\infty$\label{iamean3}
   \item $m(A)=m(A^{-1})$ for every $\mu^0$-measurable $A\subseteq \G^0$ \label{iasymm}
\end{enumerate}
\end{definition}

\subsection{q-Normality and Cost}

\begin{definition}
Fix a discrete p.m.p. groupoid $\G$.
A subset $A\subseteq \G$ is said to \textbf{generate} $\G$ if the union $\left \langle A\right \rangle:=\bigcup_{n\in\:\mathbb{N}} (A\cup A^{-1})^n$ is a $\mu^1$-conull subset of $\G$.
\end{definition}

\begin{definition}
A subset of a discrete p.m.p.\ groupoid $A\subseteq\G$ is called \textbf{aperiodic} if for almost every $x\in \G^0$, the set $s^{-1}(x)\cap A$ is infinite.
\end{definition}

Sorin Popa introduced the notion of q-normality in \cite{Po06SomeComp}.

\begin{definition}
A subgroupoid $\h \leq \G$ is \textbf{q-normal} in $\G$ if there exists a countable collection of partial sections $\Sigma \subset [[\G]]$ generating $\G$ such that for every $\sigma\in\Sigma$, the set $\h^\sigma \cap \h$ is aperiodic on $s(\sigma)$.
\end{definition}

\begin{prop}\label{qnormext}
If the groupoid $\h$ is q-normal in $\G$ and $\varphi: \K\rightarrow \G$ is a groupoid extension of $\G$, then $\varphi^{-1}(\h)$ is q-normal in $\K$.
\end{prop}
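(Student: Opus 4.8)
The plan is to lift a family of partial sections witnessing the q-normality of $\h$ in $\G$ to one witnessing q-normality of $\varphi^{-1}(\h)$ in $\K$, by taking the full $\varphi$-preimage of each section. The engine is the observation that, for a groupoid extension, $\varphi$ restricts to a bijection from each source fiber $s^{-1}(\tilde x)$ of $\K$ onto $s^{-1}(\varphi(\tilde x))$ in $\G$ (and likewise for range fibers); this is the fiberwise form of $\varphi$ being measure preserving, and it is transparent once we invoke the equivalence between extensions and actions from \cite{BTD} to assume $\K=\G\ltimes W$ with $\varphi(g,w)=g$. From this I extract two facts used throughout: (i) for any $\sigma\in[[\G]]$ the preimage $\varphi^{-1}(\sigma)$ is again a partial bisection of $\K$, since injectivity of $s$ and $r$ on $\sigma$ lifts through the fiber bijections; and (ii) (path lifting) any factorization of $\varphi(\tilde g)$ in $\G$ lifts to a factorization of $\tilde g$ in $\K$.

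Let $\Sigma\subseteq[[\G]]$ be a countable family witnessing q-normality of $\h$ in $\G$, and set $\tilde\Sigma:=\{\varphi^{-1}(\sigma):\sigma\in\Sigma\}\subseteq[[\K]]$, a countable family of partial bisections by (i). To see that $\tilde\Sigma$ generates $\K$, I would show $\langle\tilde\Sigma\rangle=\langle\varphi^{-1}(\Sigma)\rangle\supseteq\varphi^{-1}(\langle\Sigma\rangle)$: given $\tilde g$ with $\varphi(\tilde g)=g_1\cdots g_m$ where each $g_i\in\Sigma\cup\Sigma^{-1}$, path lifting (ii) produces a factorization $\tilde g=\tilde g_1\cdots\tilde g_m$ with each $\tilde g_i\in\varphi^{-1}(\Sigma\cup\Sigma^{-1})=\tilde\Sigma\cup\tilde\Sigma^{-1}$. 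Since $\langle\Sigma\rangle$ is $\mu^1_\G$-conull and $\varphi$ is measure preserving, $\varphi^{-1}(\langle\Sigma\rangle)$ is $\mu^1_\K$-conull, and hence so is $\langle\tilde\Sigma\rangle$.

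The main step is to verify, for each $\sigma\in\Sigma$ with $\tsigma:=\varphi^{-1}(\sigma)$, that $(\varphi^{-1}(\h))^{\tsigma}\cap\varphi^{-1}(\h)$ is aperiodic on $s(\tsigma)$. The crux is the identity $(\varphi^{-1}(\h))^{\tsigma}\cap\varphi^{-1}(\h)=\varphi^{-1}(\h^\sigma\cap\h)$. Since $\varphi$ is a homomorphism and $\varphi(\tsigma)\subseteq\sigma$, conjugation commutes with $\varphi$, giving $\varphi(\tilde g^{\tsigma^{-1}})=\varphi(\tilde g)^{\sigma^{-1}}$ for $\tilde g\in\K_{s(\tsigma)}$; thus $\tilde g\in\varphi^{-1}(\h)$ and $\tilde g^{\tsigma^{-1}}\in\varphi^{-1}(\h)$ hold precisely when $\varphi(\tilde g)\in\h$ and $\varphi(\tilde g)^{\sigma^{-1}}\in\h$, i.e. when $\varphi(\tilde g)\in\h^\sigma\cap\h$. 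Via the fiber bijection, for a.e. $\tilde x\in s(\tsigma)$ the map $\tilde g\mapsto\varphi(\tilde g)$ carries $s^{-1}(\tilde x)\cap\varphi^{-1}(\h^\sigma\cap\h)$ bijectively onto $s^{-1}(\varphi(\tilde x))\cap(\h^\sigma\cap\h)$, which is infinite because $\varphi(\tilde x)\in s(\sigma)$ and $\h^\sigma\cap\h$ is aperiodic on $s(\sigma)$; the null set of bad $\varphi(\tilde x)$ pulls back to a null set of $\tilde x$ since $\varphi^0$ is measure preserving. This yields aperiodicity on $s(\tsigma)$, so $\tilde\Sigma$ witnesses q-normality of $\varphi^{-1}(\h)$ in $\K$.

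The principal obstacle is establishing and correctly exploiting the conjugation identity in the last paragraph, and in particular recognizing that one must lift each $\sigma$ to the \emph{single} bisection $\varphi^{-1}(\sigma)$ rather than to a finer decomposition of it: conjugation by a proper sub-bisection only sees the lifts lying in that piece, so the full count of $\h^\sigma\cap\h$ would not be recovered and aperiodicity could fail to transfer. Everything else—the generation argument and the measure-theoretic bookkeeping—is routine once the fiberwise bijectivity of $\varphi$, equivalently the reduction to $\K=\G\ltimes W$, is in hand.
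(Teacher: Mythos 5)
Your proposal is correct and takes essentially the same route as the paper: both lift the witnessing family $\Sigma$ to $\{\varphi^{-1}(\sigma)\}$, use the containment $(\varphi^{-1}(\h))^{\varphi^{-1}(\sigma)}\cap\varphi^{-1}(\h)\supseteq\varphi^{-1}(\h^\sigma\cap\h)$, and invoke the fiberwise surjectivity of the extension to transfer aperiodicity. The paper's proof is a three-sentence sketch; yours supplies the details (fiber bijections, path lifting for generation, the conjugation identity) that the paper leaves implicit.
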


\begin{proof}
Let $\Sigma$ be a countable collection of partial sections of $\G$ witnessing the q-normality of $\h$ in $\G$. Let $\Sigma':=\{\varphi^{-1}(\sigma)\:|\: \sigma \in \Sigma\}$. Notice $\Sigma'$ is a collection of sections of $\K$ that generate $\K$. Now $\varphi^{-1}(\sigma)\varphi^{-1}(\h)\varphi^{-1}(\sigma)^{-1}\cap \varphi^{-1}(\h)\supseteq \varphi^{-1}(\sigma\h\sigma^{-1}\cap \h)$. This along with the fact that groupoid extensions are surjective lets us check q-normality of $\varphi^{-1}(\h)$ in $\K$.
\end{proof}

\begin{prop}
Let $\varphi: \h \rightarrow \G$ be an extension of an amenable groupoid $\G$. Then $\h$ is amenable.
\end{prop}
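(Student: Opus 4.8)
The plan is to pull back, along $\varphi$, a sequence of approximately invariant functions (a F\o{}lner sequence) witnessing the amenability of $\G$. Since $\G$ is amenable, its left translation action on itself---viewed as a fibered space via the range map $r_{\G}$---is amenable, so there is a sequence of Borel functions $f_n : \G \to [0,1]$ with $\sum_{w \in r_{\G}^{-1}(x)} f_n(w) = 1$ for $\mu^0$-a.e.\ $x \in \G^0$, and $\sum_{w \in r_{\G}^{-1}(r_{\G}(g))} |f_n(w) - f_n(g^{-1}w)| \to 0$ for $\mu^1$-a.e.\ $g \in \G$. I would then set $F_n := f_n \circ \varphi : \h \to [0,1]$ and argue that $(F_n)$ witnesses the amenability of the left translation action of $\h$.

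The crucial structural input is that a groupoid extension is fiberwise bijective on range fibers: writing $\varphi^0 : \h^0 \to \G^0$ for the induced map on unit spaces, for $\mu^0$-a.e.\ $y \in \h^0$ the map $\varphi$ restricts to a \emph{bijection} $r_{\h}^{-1}(y) \to r_{\G}^{-1}(\varphi^0(y))$. This is precisely where being a measure-preserving extension is used, as opposed to being a general homomorphism, and it can be seen via the correspondence between extensions of $\G$ and p.m.p.\ $\G$-actions recalled above (for an action the extension groupoid is the semidirect product and $\varphi$ is the projection, which is manifestly range-fiber bijective). Alternatively one argues directly: the disintegration $\mu^1_{\h} = \int_{\h^0} c_y^r \, d\mu^0_{\h}$ of the counting-measure structure is carried by $\varphi$, over the base map $\varphi^0$, to the disintegration $\mu^1_{\G} = \int_{\G^0} c_x^r \, d\mu^0_{\G}$, and preservation of counting measure along $\mu^0$-a.e.\ fiber forces both injectivity and surjectivity there. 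I would isolate this as a lemma (or cite \cite{BTD}, \cite{KiTD18}).

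Granting fiber bijectivity, both amenability conditions for $(F_n)$ follow by a change of variables along these fiber bijections. For condition (1), for a.e.\ $y$ we have $\sum_{k \in r_{\h}^{-1}(y)} F_n(k) = \sum_{g \in r_{\G}^{-1}(\varphi^0(y))} f_n(g) = 1$, using that $\varphi^0$ pushes $\mu^0_{\h}$ to $\mu^0_{\G}$ so that $\varphi^0(y)$ is a.e.\ a good base point for every $f_n$ simultaneously. For condition (2), fix $l \in \h$ and put $g := \varphi(l)$. Since $\varphi$ is a homomorphism, $\varphi(l^{-1}k) = g^{-1}\varphi(k)$, and as $k$ runs over $r_{\h}^{-1}(r_{\h}(l))$ its image $\varphi(k)$ runs bijectively over $r_{\G}^{-1}(r_{\G}(g))$; hence
\[
\sum_{k \in r_{\h}^{-1}(r_{\h}(l))} \bigl|F_n(k) - F_n(l^{-1}k)\bigr| = \sum_{w \in r_{\G}^{-1}(r_{\G}(g))} \bigl|f_n(w) - f_n(g^{-1}w)\bigr| \longrightarrow 0 .
\]
This holds whenever $g = \varphi(l)$ lies in the $\mu^1_{\G}$-conull set of good elements for $(f_n)$; since $\varphi$ is measure preserving, the $\varphi$-preimage of the null set of bad elements is $\mu^1_{\h}$-null, so condition (2) holds for $\mu^1_{\h}$-a.e.\ $l$.

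The main obstacle is the first step---pinning down and justifying the fiberwise bijectivity of the extension---rather than the pullback computation, which is then essentially routine. The remaining points to handle with care are purely measure-theoretic: confirming that $\varphi$ (resp.\ $\varphi^0$) pushes $\mu^1_{\h}$ to $\mu^1_{\G}$ (resp.\ $\mu^0_{\h}$ to $\mu^0_{\G}$), so that the a.e.\ statements for the $f_n$ transfer to a.e.\ statements for the $F_n$, and observing that each $F_n$ is Borel with values in $[0,1]$ as a composition of $\varphi$ with $f_n$.
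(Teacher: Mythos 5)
Your proposal is correct and takes essentially the same approach as the paper, whose entire proof is the one-line observation that $(f_n\circ\varphi)$ witnesses amenability of $\h$. The fiberwise bijectivity of $\varphi$ on range fibers that you isolate is exactly the implicit justification behind that one-liner, so your write-up is simply a more detailed version of the same argument.
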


\begin{proof}
If $(f_n)$ is a sequence witnessing the amenability of $\G$, then the sequence $(f_n\circ \varphi )$ witnesses the amenability of $\h$.
\end{proof}

\begin{definition}
The \textbf{cost} of a principal discrete p.m.p. groupoid $R$ is defined
\[C_{\mu^0}(R)=\inf_{A\text{ generating }R} \mu^1(A).\]
\end{definition}

\begin{prop}\label{qnorm}
Let $S\leq R$ be principal discrete p.m.p. groupoids. If $S$ is q-normal in $R$ then $C_{\mu^0_R}(R)\leq  C_{\mu_R^0}(S)$.
\end{prop}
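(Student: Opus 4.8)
The plan is to start from a near-optimal generating set $\Phi$ of $S$, with $\mu^1(\Phi)\le C_{\mu^0_R}(S)+\varepsilon$, and to produce a generating set of $R$ whose measure exceeds $\mu^1(\Phi)$ by an arbitrarily small amount. Since $S$ is q-normal there is a countable family $\Sigma=\{\sigma_n\}_n\subseteq[[R]]$ generating $R$ with $S^{\sigma_n}\cap S$ aperiodic on $s(\sigma_n)$. The family $\Phi\cup\Sigma$ already generates $R$, but the bisections $\sigma_n$ may be expensive. The heart of the argument is to replace each $\sigma_n$ by a \emph{small} sub-bisection $\tau_n\subseteq\sigma_n$ in such a way that $\sigma_n$ is recovered inside $\langle\Phi\cup\tau_n\rangle$; then $\Phi\cup\{\tau_n\}_n$ still generates $R$, and by arranging $\mu^1(\tau_n)<\varepsilon 2^{-n}$ we obtain a generating set of cost at most $C_{\mu^0_R}(S)+2\varepsilon$.

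Fix one $\sigma:=\sigma_n$, write $\sigma x$ for the unique element of $\sigma$ with source $x\in s(\sigma)$, and set $\bar\sigma(x):=r(\sigma x)$. The first key step is to reinterpret the aperiodicity hypothesis. Because $R$ is principal, for $x,y\in s(\sigma)$ there is an element of $S^\sigma\cap S$ with source $x$ and range $y$ precisely when $x\sim_S y$ and $\bar\sigma(x)\sim_S\bar\sigma(y)$: the unique element of $R$ joining $x$ to $y$ must lie in $S$, and its $\sigma$-conjugate is the unique element joining $\bar\sigma(x)$ to $\bar\sigma(y)$, which lies in $S$ exactly under the stated condition. Thus, letting $T$ be the equivalence relation on $s(\sigma)$ defined by $x\mathbin{T}y\iff(x\sim_S y\text{ and }\bar\sigma(x)\sim_S\bar\sigma(y))$, the hypothesis that $S^\sigma\cap S$ is aperiodic on $s(\sigma)$ says exactly that every $T$-class is infinite.

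With $T$ aperiodic, the standard marker lemma for aperiodic Borel equivalence relations yields a Borel complete section $Y_0\subseteq s(\sigma)$ for $T$ with $\mu^0(Y_0)$ as small as we like. Put $\tau:=\sigma|_{Y_0}$, so $\mu^1(\tau)=\mu^0(Y_0)$ is small. I then claim $\sigma\subseteq\langle\Phi\cup\tau\rangle$ up to a null set. For a.e.\ $x\in s(\sigma)$, completeness gives $x'\in Y_0$ with $x\mathbin{T}x'$; choosing $x'$ as a Borel function of $x$ via Lusin--Novikov, let $k\in S$ be the unique element joining $x$ to $x'$ and $g\in S$ the unique element joining $\bar\sigma(x')$ to $\bar\sigma(x)$. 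Then $g\,(\tau x')\,k$ joins $x$ to $\bar\sigma(x)$, and \emph{by principality} it must equal $\sigma x$. Hence $\sigma x\in\langle\Phi\cup\tau\rangle$ for a.e.\ $x$, proving the claim; assembling over all $n$ shows $\Phi\cup\{\tau_n\}_n$ generates $R$, and summing the measures gives $C_{\mu^0_R}(R)\le C_{\mu^0_R}(S)+2\varepsilon$, whence the result by letting $\varepsilon\to 0$.

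The decisive point, and the only genuine obstacle, is the use of principality to convert the abstract aperiodicity of $S^\sigma\cap S$ into the combinatorial statement that $\sigma x$ is forced to equal the product $g\,(\tau x')\,k$: once source and range match there is nothing left to verify, which is precisely why q-normality lowers cost in the principal setting. The remaining ingredients—the marker lemma producing small complete sections for an aperiodic relation, and the Lusin--Novikov selection making $x\mapsto(x',g,k)$ Borel so that the decomposition is genuinely measurable—are routine, as is the geometric summation of the measures $\mu^1(\tau_n)$.
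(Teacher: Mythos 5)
Your proof is correct and follows essentially the same route as the paper's, which is not written out here but deferred to \cite{TD14} (Proposition A.2): one converts the aperiodicity of $S^{\sigma_n}\cap S$ on $s(\sigma_n)$ into aperiodicity of the auxiliary equivalence relation $T$, replaces $\sigma_n$ by its restriction $\tau_n$ to a complete section of small measure obtained from the marker lemma, and uses principality to recover $\sigma_n$ inside $\langle\Phi\cup\tau_n\rangle$ via $S$-words. All the steps, including the Borel selection of $x\mapsto x'$ and the summation $\sum_n\mu^1(\tau_n)<\varepsilon$, are carried out as in that reference, so there is nothing to add.
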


This is proved in Proposition A.2 in \cite{TD14} and was first observed by Furman in \cite{Ga00}.





\section{A Folklore Lemma in the Groupoid Setting}

\begin{definition}
An action of a discrete countable group $G$ on a space $X$ is called \textbf{amenable} if there exists a finitely additive probability measure \(m:X\rightarrow [0,1]\) that is invariant under the group action.
\end{definition}

The following folklore lemma's origins go back to von Neumann's original paper which introduced the notion of amenability \cite{vN29}.

\begin{lemma}
Let a discrete countable group $G$ act on $X$ and $Y$. Suppose the action \(G \acts X\) is amenable. If the action of the stabilizer $G_x \acts Y$ is an amenable action for every $x\in X$, then \(G\acts Y\) is amenable.
\end{lemma}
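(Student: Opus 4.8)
The plan is to average the stabilizer means against the invariant mean on $X$. Let $m_X$ be a $G$-invariant finitely additive probability measure for $G\acts X$, viewed as a mean (positive unital linear functional) on $\ell^\infty(X)$. For each $x\in X$ the hypothesis gives a $G_x$-invariant mean on $Y$; the first task is to organize these into a genuinely $G$-\emph{equivariant} family $(m^x)_{x\in X}$ of means on $Y$, meaning $m^{gx}=g_*m^x$ for all $g\in G$, where $g_*$ is pushforward along $y\mapsto gy$, i.e. $(g_*m)(f)=m(y\mapsto f(gy))$.

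To build this family, fix (by the axiom of choice) a transversal $T\subseteq X$ meeting each $G$-orbit exactly once, and for each $t\in T$ choose a $G_t$-invariant mean $\mu_t$ on $Y$, available since $G_t\acts Y$ is amenable. Every $x\in X$ can be written $x=gt$ with $t\in T$ unique and $g$ unique up to right multiplication by $G_t$; set $m^x:=g_*\mu_t$. This is well defined: if $gt=g't$ then $g^{-1}g'\in G_t$, so $G_t$-invariance of $\mu_t$ gives $g'_*\mu_t=g_*(g^{-1}g')_*\mu_t=g_*\mu_t$. Equivariance is then immediate from the definition, since $g_0x=(g_0g)t$ forces $m^{g_0x}=(g_0g)_*\mu_t=(g_0)_*m^x$; in particular, taking $g_0\in G_x$ recovers $G_x$-invariance of $m^x$ for free.

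Now define $m_Y$ on $\ell^\infty(Y)$ by $m_Y(f):=m_X(\Phi_f)$, where $\Phi_f(x):=m^x(f)$. Since $\lvert\Phi_f(x)\rvert\le\|f\|_\infty$, the function $\Phi_f$ lies in $\ell^\infty(X)$ and $m_X(\Phi_f)$ is defined; here we crucially use that $m_X$ is a mean on \emph{all} bounded functions, so no measurability of the choice-built assignment $x\mapsto m^x$ is required. Positivity, linearity, and $m_Y(1)=1$ are inherited pointwise from the $m^x$ and then from $m_X$, so $m_Y$ is a mean on $Y$.

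It remains to verify $G$-invariance, and this is where equivariance pays off. With the action $(g\cdot f)(y)=f(g^{-1}y)$ on $\ell^\infty(Y)$, equivariance gives, for each $x$,
\[
\Phi_{g\cdot f}(x)=m^x\big(y\mapsto f(g^{-1}y)\big)=m^{g^{-1}x}(f)=(g\cdot\Phi_f)(x),
\]
so $\Phi_{g\cdot f}=g\cdot\Phi_f$ in $\ell^\infty(X)$. Applying $m_X$ and using its $G$-invariance yields $m_Y(g\cdot f)=m_X(g\cdot\Phi_f)=m_X(\Phi_f)=m_Y(f)$, as required. The only genuinely delicate point is the first step, the equivariant choice of stabilizer means, whose well-definedness rests entirely on the $G_t$-invariance of each $\mu_t$; everything downstream is a formal consequence of that equivariance together with the invariance of $m_X$.
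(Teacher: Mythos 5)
Your proof is correct. Note that the paper never actually proves this folklore lemma: it is stated as motivation and the paper instead proves its groupoid generalization (Lemma \ref{lm1}), so the relevant comparison is with that argument. Your route is the classical mean-theoretic one: choose a transversal of the $G$-orbits in $X$, push forward a $G_t$-invariant mean on $Y$ along coset representatives to get an equivariant family $(m^x)_{x\in X}$, and integrate against the invariant mean $m_X$; the well-definedness check via $g^{-1}g'\in G_t$ and the identity $\Phi_{g\cdot f}=g\cdot\Phi_f$ are both right. The paper's proof of Lemma \ref{lm1} is the quantitative shadow of exactly this averaging: it replaces $m_X$ by a Reiter-type sequence $(f_n)$ on the fibers of $W$, replaces the stabilizer means by amenability sequences $a_{\sigma,n}$, and forms the convex combination $\xi_n(v)=\sum_{w} a_{\sigma_w,n}(\phi(w)^{-1}v)\,f_n(w)$, with the transversal replaced by a Borel family of sections $\Sigma$ and the coset representatives by a Lusin--Novikov selector $\phi$. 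What the functional version buys is brevity; what the paper's version buys is transferability to the measured setting, where your construction breaks down: the transversal and the assignment $x\mapsto m^x$ are made by unrestricted choice and carry no measurability, and means on $L^\infty$ of a nonatomic measure space cannot be selected fiberwise in any measurable way, which is precisely why the groupoid lemma must be run with asymptotically invariant functions and careful diagonalization over the sets $D_n'$ rather than with a single limiting mean.
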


We generalize this lemma in the following manner. 

\begin{lemma}\label{lm1}
Let $\G$ be a discrete p.m.p. groupoid. Fix actions of $\G$ on the locally countable Borel fibered spaces $p:W\rightarrow \G^0$ and $q:V\rightarrow \G^0$. Suppose $\G \acts W$ is an amenable action.  Suppose we have a countable collection of nontrivial partial measurable sections $\Sigma=\{\sigma_i\}_{i\in\mathbb{N}}$ of $W$ with the following properties
\begin{itemize}
\item $(\G\sigma_i)\cap (\G\sigma_j)=\varnothing$ if $i\neq j$
\item $\bigsqcup_{i\in \mathbb{N}} (\G\sigma_i)=W$
\item the restricted action $\G_{\sigma_i} \acts q^{-1}(p(\sigma_i))$ is amenable for all $i\in \mathbb{N}$.
\end{itemize}

Then the action $\G\acts V$ is amenable.
\end{lemma}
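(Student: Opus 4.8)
The goal is to build an amenability sequence $(h_n)$ for the action $\G \acts V$ out of two ingredients: the amenability sequence $(f_n)$ witnessing $\G \acts W$, and, for each $i$, an amenability sequence $(g^i_m)$ witnessing $\G_{\sigma_i} \acts q^{-1}(p(\sigma_i))$. The geometric picture behind the hypotheses is that the partial sections $\Sigma = \{\sigma_i\}$ cut $W$ into $\G$-orbits $\G\sigma_i$, so that a single point $\sigma_i$ in each orbit serves as a "basepoint" whose stabilizer is $\G_{\sigma_i}$. Concretely, let me think about how to read off data from a point $w \in W^x$. Since $w$ lies in exactly one $\G\sigma_i$, there is a (unique, because $\sigma_i$ is a section) element $\gamma \in \G$ with $r(\gamma) = x$ taking the basepoint $\sigma_i$ at $s(\gamma) = p(\sigma_i)$ to $w$; write $w = \gamma\,\sigma_i$. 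This $\gamma$ is determined only up to right multiplication by $\G_{\sigma_i}$, exactly as in the group version of the folklore lemma where a point of an orbit corresponds to a coset $\gamma G_x$.

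\textbf{Constructing the averaging functions.} The plan is to define, for suitable indices $n$ and $m$, a function $h_{n,m}: V \to [0,1]$ by averaging the stabilizer functions along the $W$-fibers, weighted by $f_n$. Given $v \in V^x$, I want to set
\[
h_{n,m}(v) \;=\; \sum_{w \in W^x} f_n(w)\, g^{i(w)}_m\!\bigl(\gamma_w^{-1} v\bigr),
\]
where $w \in \G\sigma_{i(w)}$ and $\gamma_w$ is a choice of element with $\gamma_w\,\sigma_{i(w)} = w$ (so $\gamma_w^{-1} v$ lands in $q^{-1}(p(\sigma_{i(w)}))$, the space on which $g^{i(w)}_m$ lives). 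The point is that $g^{i}_m$ is $\G_{\sigma_i}$-almost-invariant, so the ambiguity in $\gamma_w$ (right multiplication by the stabilizer) does not affect the value in the limit; one must check measurability of the selection $w \mapsto \gamma_w$, which follows from the Lusin–Novikov uniformization theorem since $r$ is countable-to-one. First I would verify the normalization: $\sum_{v \in V^x} h_{n,m}(v) = \sum_{w \in W^x} f_n(w)\cdot 1 = 1$, using that each $g^i_m$ sums to $1$ on its fiber and that $v \mapsto \gamma_w^{-1} v$ is a bijection of $V^x$ with $q^{-1}(p(\sigma_{i(w)}))$.

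\textbf{The almost-invariance estimate and the main obstacle.} The substance is showing $\sum_{v \in V^{r(g)}} |h_{n,m}(v) - h_{n,m}(g^{-1} v)| \to 0$ for a.e.\ $g$, under a diagonal choice $m = m(n)$. I would split the difference into two contributions. First, moving $g$ across the $W$-sum: replacing $w$ by $g^{-1}w$ shifts $\gamma_w$ to $g^{-1}\gamma_w$ but keeps the orbit index and hence the same stabilizer function; the mismatch here is controlled by the $W$-almost-invariance $\sum_w |f_n(w) - f_n(g^{-1}w)| \to 0$. Second, the genuinely delicate part: when I reindex so that the same basepoint $\sigma_i$ appears, the two choices $\gamma_{g^{-1}w}$ and $g^{-1}\gamma_w$ of element carrying $\sigma_i$ to $g^{-1}w$ differ by an element of the stabilizer $\G_{\sigma_i}$, and absorbing this discrepancy is exactly where the $\G_{\sigma_i}$-almost-invariance of $g^i_m$ is consumed. \emph{The hard part will be controlling this second term uniformly enough to pass to a diagonal limit}, since the stabilizer element that must be averaged over depends on $g$, on $w$, and on $i$ all at once, and each stabilizer action has its own rate of convergence in $m$. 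I expect to handle this by fixing a countable dense family of $g$'s (as in the proof of \cref{eqamen}), for each such $g$ estimating the second term by a finite sum over the (essentially finitely many, after truncating the $f_n$-mass) relevant orbit indices $i$, and only then choosing $m(n)$ large enough to beat all of these simultaneously; an appeal to \cref{invrmk} then upgrades almost-invariance under this dense family to the a.e.-$g$ statement of item (2) and extracts the desired subsequence.
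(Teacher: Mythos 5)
Your proposal is correct and follows essentially the same route as the paper's proof: the averaged function $\sum_{w\in W^{q(v)}} f_n(w)\,g^{i(w)}_m(\gamma_w^{-1}v)$ is exactly the paper's $\xi_n$, the Lusin--Novikov selection $w\mapsto\gamma_w$ is the paper's $\phi$, the stabilizer discrepancy you identify is the paper's $h_{g,w}=\phi(gw)^{-1}g\phi(w)\in\G_{\sigma_w}$, and the two-term splitting of the invariance estimate is identical. The only (immaterial) divergence is in the final diagonalization: the paper truncates $f_n$ to finite fiber supports and uses an exhaustion of $\G$ by finite-measure sets together with a Borel--Cantelli argument to get almost-everywhere pointwise invariance directly, whereas you route through invariance under a countable dense subset of $[\G]$ and then invoke \cref{invrmk} to extract the subsequence; both forms of bookkeeping work.
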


\begin{proof}
Fix $(f_n)$ an amenability sequence for the action $\G\acts W$ and denote $f_n |_{W^x}$ by $f_n^x$.

Pick a sequence of Borel subsets $(W_n)_{n\in \mathbb{N}}\subseteq W$ such that for every $n\in\mathbb{N}$ and for every $x\in \G^0$
\begin{itemize}
\item $|W_n\cap W^x|$ is finite
\item $f_n^x(W_n \cap W^x)>1-2^{-n}$.
\end{itemize}
We define
\[\hat f_n(w)= \begin{cases}
      \frac{f_n(w)}{f_n(W_n \cap W^{p(w)})} & w\in W_n\\
      0 & \text{otherwise}
   \end{cases}
\]
by restricting supports and renormalizing. The sequence $(\hat f_n)$ also witnesses the amenability of the action $\G\acts W$ and has finite support on each fiber. By replacing $f_n$ by $\hat f_n$, we may therefore assume that each of the functions $f_n$ is supported on $W_n$.

For $w\in W$, we denote by $\sigma_w$ the unique element $\sigma$ of $\Sigma$ such that $w\in \G\sigma$.  Now, we would like to find a measurable function $\phi:W\rightarrow \G$ such that $r(\phi(w))=p(w)$ and $\{w\}=\phi (w)\sigma_w$. Consider the map $\phi': \G \times \Sigma\rightarrow W$ defined by $(g,\sigma) \mapsto g\sigma$. This is surjective by the hypothesis on $\Sigma$ and countable-to-one since $\G$ has countable fibers. By the Lusin-Novikov Uniformization Theorem\cite{Ke95}, there is an injective Borel map $\phi^*: W\rightarrow \G \times \Sigma$ with $\phi'(\phi^*(w))=w$. By composing with a projection to $\G$, this is measurable and we get the map $\phi$ we were looking for. By abuse of notation we identify $\phi (w)\sigma_w$ with the point $w$ it contains.

For $g\in \G$ and $w\in W^{s(g)}$, we let $h_{g,w}:=\phi(gw)^{-1}g\phi(w)$. Notice that $h_{g,w}\in \G_{\sigma_w}$ since
\begin{align*}
h_{g,w}\sigma_w=\phi(gw)^{-1}g\phi(w)\sigma_w&=\phi(gw)^{-1}(gw) =\phi(gw)^{-1}(\phi(gw)\sigma_{gw}) \\
&= id_{s(\phi(gw))}\sigma_{gw}\in \sigma_{gw}=\sigma_w
\end{align*}
Let $(D_n)_{n\in \mathbb{N}}$ be an increasing sequence of finite $\mu^1$-measure subsets of $\G$ which exhaust the space. For each $\sigma\in \Sigma$, we pick a sequence $a_{\sigma,n}$ witnessing the amenability of the action $\G_\sigma\acts q^{-1}(p(\sigma))$. We may choose these sequences in such a way that there exists a sequence $(D_n')_{n\in\mathbb{N}}$ satisfying the following
\begin{itemize}
\item $\dfrac{\mu^1(D_n'\cap D_i)}{\mu^1(D_i)}>1-2^{-n}$  for all $i\leq n$
\item for every $ g$ in $ D_n'$ and for every $w$ in $W^{s(g)}\cap W_n $
\begin{equation}
\left \Vert a_{\sigma_w,n}^{r(h_{g,w})}-h_{g,w} a_{\sigma_w,n}^{s(h_{g,w})}\right \Vert_{\ell^1(V^{r(h_{g,w})})} <2^{-n} \label{restriction}
\end{equation}
\end{itemize}
This is accomplished as follows. First take an amenability sequence $\hat a_{\sigma,n}$ for the action $\G_\sigma\acts q^{-1}(p(\sigma))$. Here, for a given $g\in \G$ and $n\in \mathbb{N}$, the set $W^{s(g)}\cap W_n$ is finite.

Fix an element $g\in \G$. Let $N(n,g)$ be the least integer such that for all $N\geq N(n,g)$,
\[
\left \Vert \hat a^{r(h_{g,w})}_{\sigma_w,N} - h_{g,w}\hat a^{s(h_{g,w})}_{\sigma_w,N}\right \Vert_{\ell^1(V^{r(h_{g,w})})} \leq 2^{-n} \text{ for every } w\in  W^{s(g)}\cap W_n.\]
Such an $N(n,g)$ exists because $h_{g,w}\in \G_{\sigma_w}$ and by definition of $\hat a_{\sigma,n}$ being an amenability sequence. For each $c\in \mathbb{N}$, define $\hat D_n(c):=\left \{g\in D_n \:|\: N(n,g)<c \right \}$. The sets $\hat D_n(c)$ increase to $D_n$ as $c\rightarrow \infty$. Fix $c_n$ such that $\frac{\mu^1(\hat D_n(c_n)\cap D_i)}{\mu^1(D_i)}>1-2^{-n}$ for every $i\leq n$. Define $a_{\sigma,n}:=\hat a_{\sigma,c_n}$ and $D_n':=\hat D_n(c_n)$.

Now, for each $n\in\mathbb{N}$, define $\xi_n(v):=\sum_{w\in W^{q(v)}} a_{\sigma_w,n}(\phi(w)^{-1}v) f^{q(v)}_n(w)$. This is defined since $\phi(w)^{-1}v\in q^{-1}(p(\sigma_w))$. We show the sequence $(\xi_n)_{n\in\mathbb{N}}$ witnesses the amenability of the action $\G \acts V$. We first check it satisfies item (2) in the definition of amenability. Let $g\in \G$ and let $x=s(g)$ and $y=r(g)$.
\begin{align*}
\Vert \xi_n^y-g \xi_n^x\Vert_{\ell^1(V^y)} &=\sum_{v\in V^y}\left | \sum\limits_{w\in W^{y}} a_{\sigma_w,n}(\phi(w)^{-1} v) f^{y}_n(w) -\sum\limits_{w\in W^{x}} a_{\sigma_w,n}(\phi(w)^{-1}(g^{-1}v)) f^{x}_n(w) \right |
\end{align*}
\begin{align}
&\hspace{.65in}\leq \sum_{v\in V^y}\left | \sum\limits_{w\in W^{y}} a_{\sigma_w,n}(\phi(w)^{-1}v) f^{y}_n(w) -\sum\limits_{w\in W^{x}} a_{\sigma_w,n}(\phi(g w)^{-1}v) f^{x}_n(w) \right | \label{eq1}\\
&\hspace{.65in} +\sum_{v\in V^y}\left | \sum\limits_{w\in W^{x}} a_{\sigma_w,n}(\phi(g w)^{-1} v) f^{x}_n(w) -\sum\limits_{w\in W^{x}} a_{\sigma_w,n}(\phi(w)^{-1}g^{-1}v) f^{x}_n(w) \right | \label{eq2}
\end{align}

Let's first look at \cref{eq1} now and bound it by rewriting it as follows.
\begin{align*}
&\sum_{v\in V^y}\left | \sum\limits_{w\in W^{y}} a_{\sigma_w,n}(\phi(w)^{-1} v) f^{y}_n(w) -\sum\limits_{w\in W^{x}} a_{\sigma_w,n}(\phi(g w)^{-1} v) f^{x}_n(w) \right |\\
&\hspace{.15in}= \sum_{v\in V^y}\left | \sum\limits_{w\in W^{y}} a_{\sigma_w,n}(\phi(w)^{-1} v) f^{y}_n(w) -\sum\limits_{w\in W^{y}} a_{\sigma_w,n}(\phi(w)^{-1} v) f^{x}_n(g^{-1} w) \right |\\
&\hspace{.15in}\leq \sum\limits_{v\in V^y} \sum\limits_{w\in W^y} \left \vert a_{\sigma_w,n}(\phi(w)^{-1} v) \right \vert \left \vert f^y_n(w)-f^x_n(g^{-1} w)\right \vert \\
&\hspace{.15in}= \sum\limits_{w\in W^y} \left \vert f^y_n(w)-g f^x_n(w)\right \vert = \left \Vert f^y_n-g f^x_n\right \Vert_{\ell^1(W^y)} \longrightarrow 0 \text{ as } n\rightarrow \infty
\end{align*}
Now, we find a bound for ~\cref{eq2}.
\begin{align*}
&\sum_{v\in V^y}\left |  \sum\limits_{w\in W^x} \left [ a_{\sigma_w,n}(\phi(g w)^{-1} v) -  a_{\sigma_w,n}(h_{g,w}^{-1}\phi(g w)^{-1} v)\right ] f^x_n(w) \right |\\
&\hspace{.15in}\leq \sum\limits_{v\in V^y}\sum\limits_{w\in W^{x}} \abs{a_{\sigma_w,n}(\phi(g w)^{-1} v) - h_{g,w} a_{\sigma_w,n}(\phi(g w)^{-1} v)} f^{x}_n(w)\\
&\hspace{.15in}=\sum_{w\in W^x \cap W_n} f_n^x(w) \left \Vert a_{\sigma_w,n}^{r(h_{g,w})}-h_{g,w} a_{\sigma_w,n}^{s(h_{g,w})}\right \Vert_{\ell^1(V^{r(h_{g,w})})}\leq 2^{-n}
\end{align*}
We show for $\mu^1$-almost every $g$, this last inequality holds when $n$ is large enough. Let $E_{i,k}:=D_i \setminus D_k'$. Notice that $\mu^{1}(E_{i,k})\leq 2^{-k}\mu^1(D_i)$ for $k\geq i$ and so $\sum_k \mu^1(E_{i,k})<\infty$. Thus, $\mu^1(\limsup_k E_{i,k})=0$ and, in fact, $\mu^1(\bigcup_{i} \limsup_k E_{i,k})=0$. By ~\cref{restriction}, the set $\bigcup_{i} \limsup_k E_{i,k}$ contains exactly the $g\in\G$ where the inequality fails to hold for infinitely many $n$. Thus, we get that item (2) in the definition of amenability is satisfied for the sequence $(\xi_n)$. 

It remains to check item (1). By the definition of $\xi_n$, we have
\[ \xi_n^x(v)=\sum\limits_{w\in W^x} f^x_n(w) [\phi(w)  a_{\sigma_w,n}^{s(\phi(w))}(v)].\]
The measure $\phi(w) a^{s(\phi(w))}_{\sigma_w,n}$ is a probability measure since it is a pushforward of a probability measure. The function $\xi_n^x$ is a convex combination of probability measures, and therefore, is a probability measure.
\end{proof}

\section{Cost of Inner Amenable Groupoids}

We generalize the following structural result about inner amenable groupoids from the setting of groups\cite[Theorem 8]{TD14}.

\begin{thm} \label{iaimpqnorm}
If $\G$ is an inner amenable groupoid, and $\h \leq \G$ is a nowhere amenable subgroupoid, then there is a groupoid $\K$ such that $\h$ is q-normal in $\K$ and $\K$ is q-normal in $\G$. Moreover, $\K$ can be chosen so that, for every $n\in \mathbb{N}$, the groupoid $\K\cap \K^{\sigma_1}\cap \dotsc \cap \K^{\sigma_n}$ is aperiodic for all bisections $\sigma_1,\dotsc, \sigma_n \in [\G]$.
\end{thm}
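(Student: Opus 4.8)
The plan is to dispatch two reductions, build $\K$ by a maximality argument, and then extract the ``moreover'' clause (from which both q-normality statements follow) by playing the amenable conjugation action supplied by inner amenability against the nowhere amenability of $\h$ through Lemma~\ref{lm1}. First, since $\h$ is nowhere amenable it is aperiodic: on any positive-measure $B\subseteq\G^0$ on which the source fibres of $\h$ were essentially finite, $\h_B$ would be amenable. Second, the ``moreover'' clause already yields \emph{both} required q-normalities. Indeed, the full group $[\G]$ generates $\G$ up to a $\mu^1$-null set, so fixing a countable $d$-dense $\Sigma\subseteq[\G]$ and applying the $n=1$ case of the moreover to each $\sigma\in\Sigma$ shows $\K^\sigma\cap\K$ is aperiodic on $s(\sigma)=\G^0$; thus $\Sigma$ witnesses that $\K$ is q-normal in $\G$. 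Hence it suffices to construct $\K\geq\h$ for which $\h$ is q-normal in $\K$ and for which $\K\cap\K^{\sigma_1}\cap\dots\cap\K^{\sigma_n}$ is aperiodic for every $n$ and all $\sigma_1,\dots,\sigma_n\in[\G]$.

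\textbf{Construction of $\K$.} Let $\mathcal{Q}$ be the collection of subgroupoids $\mathcal{L}$ with $\h\leq\mathcal{L}\leq\G$ in which $\h$ is q-normal, ordered by inclusion. The q-normalizer closure $\langle\,\tau\in[[\G]] : \h^\tau\cap\h \text{ is aperiodic on } s(\tau)\,\rangle$ lies in $\mathcal{Q}$, so $\mathcal{Q}\neq\varnothing$; and using the separability of $([[\G]],d)$ one checks that a union of a chain in $\mathcal{Q}$ again lies in $\mathcal{Q}$ (take the union of the countable witnessing families and thin it out to a countable generating subfamily). I would then let $\K$ be a maximal element of $\mathcal{Q}$, so that $\h$ is q-normal in $\K$ by construction and only the aperiodicity clause remains. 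To stay within the measurable category it is cleaner to replace the Zorn argument by a countable recursion, adjoining bisections one at a time while preserving membership in $\mathcal{Q}$.

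\textbf{Main step.} Suppose toward a contradiction that $\mathcal{M}:=\K\cap\K^{\sigma_1}\cap\dots\cap\K^{\sigma_n}$ fails to be aperiodic; after restricting to a positive-measure $B\subseteq\G^0$ we may assume $\mathcal{M}_B$ has uniformly finite fibres, so $\mu^1(\mathcal{M}_B)<\infty$ and $\mathcal{M}_B$ is amenable. Inner amenability provides, via Proposition~\ref{eqamen} applied to the conjugation-invariant equidistributed mean $m$ of Definition~\ref{iadef}, an amenable conjugation action $\G\acts W$ built from $\G$; restricting the fibres over $B$ and the acting groupoid to $\h_B$ gives an amenable action $\h_B\acts W^B$. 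I would set up Lemma~\ref{lm1} with $\h_B\acts W^B$ as the amenable action and with $V=\h_B$ under left translation, so that amenability of $\h_B\acts V$ is exactly amenability of $\h_B$: decompose $W^B$ into $\h_B$-orbits by a Lusin--Novikov selection of sections, and verify that the corresponding stabilizer actions on $V$ are amenable. The finiteness of the fibres of $\mathcal{M}_B$ is what should force these stabilizers to be essentially finite, hence amenable, while the maximality of $\K$ rules out the competing possibility of enlarging $\K$ by some $\tau\notin\K$ with $\h^\tau\cap\h$ aperiodic. Lemma~\ref{lm1} then makes $\h_B$ amenable, contradicting nowhere amenability, so every such $\mathcal{M}$ is aperiodic.

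\textbf{Main obstacle.} The genuine difficulty is the verification inside Lemma~\ref{lm1}: pinning down the fibred spaces $W$ and $V$ so that inner amenability really delivers an amenable conjugation action $\h_B\acts W$, and, above all, converting the finiteness of the fibres of $\mathcal{M}_B$ (i.e.\ non-aperiodicity of the conjugate intersection), together with the maximality of $\K$, into the amenability of the stabilizer actions that the folklore lemma demands. By contrast, the aperiodicity of $\h$, the reduction to the moreover clause, and the recursion bookkeeping with the separable pseudogroup $[[\G]]$ should all be routine.
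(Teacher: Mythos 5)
Your two reductions (nowhere amenability of $\h$ gives aperiodicity; the $n=1$ case of the ``moreover'' clause gives q-normality of $\K$ in $\G$) are fine, but your construction of $\K$ is not the one that makes the argument close, and your main step has a genuine gap. The paper does not take $\K$ maximal among subgroupoids in which $\h$ is q-normal. It decomposes $\G$ into orbits of the conjugation action of the action groupoid $\tilde\h:=H\ltimes\G^0$, where $H\leq[\h]$ is a countable generating subgroup (conjugation is an action of $\tilde\h$ on the fibered space $(\G,r)$, not of $\h$ or $\G$ itself --- a point you elide), chooses for each orbit section $\sigma$ the maximal set $A_\sigma$ on which $\h\cap\h^{\sigma}$ is nowhere amenable, and sets $\K:=\langle\h\cup\{\sigma\cap s^{-1}(A_\sigma)\}\rangle$. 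This explicit construction is exactly what the contradiction needs: the key step is to show $m(\K)=1$ for the inner amenability mean $m$, by restricting $m$ to $r^{-1}(W^0)\setminus\K$ (after a Radon--Nikodym renormalization making it equidistributed there), invoking \cref{eqamen} to get an amenable conjugation action of $\tilde\h_{W^0}$ on that set, observing that the left-translation action there is nonamenable because $\h$ is nowhere amenable, and applying \cref{lm1} to produce an orbit section $\tau\subseteq\G\setminus\K$ whose stabilizer pushes forward into $\h\cap\h^{\tau}$ and is nonamenable on a positive-measure set --- contradicting the maximality of $A_\sigma$. The ``moreover'' clause then falls out of $m(\K)=1$ together with the conjugation-invariance, diffuseness and equidistribution of $m$: each $\K^{\sigma_i}$ has mean $1$, hence so does any finite intersection, hence it is aperiodic.

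Your proposed main step does not work. You suppose $\mathcal{M}=\K\cap\K^{\sigma_1}\cap\dots\cap\K^{\sigma_n}$ has finite fibres over $B$ and hope this ``forces the stabilizers to be essentially finite.'' But the stabilizers arising in \cref{lm1} for the conjugation action are of the form $\h\cap\h^{\tau}$ for conjugation-orbit sections $\tau$ of the fibered space, and non-aperiodicity of $\mathcal{M}$ gives no control over these. Worse, if the fibered space is all of $\G$ (or $r^{-1}(B)$), as your appeal to \cref{eqamen} with the global mean suggests, then the section of units is conjugation-invariant and its stabilizer maps onto all of $\h$, which is nonamenable; so the hypotheses of \cref{lm1} fail outright and no conclusion about $\h_B$ can be drawn. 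The entire point of the paper's argument is to restrict the mean and the conjugation action to $\G\setminus\K$, where by maximality of the sets $A_\sigma$ no orbit section has a nonamenable $\h\cap\h^{\tau}$ left over; you never perform this restriction, and abstract maximality of $\K$ in your $\mathcal{Q}$ provides no substitute for it. What you flag as the ``main obstacle'' is not a verification detail --- it is the proof.
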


\begin{proof}
By Lusin-Novikov\cite{Ke95}, there is a countable subgroup $H\leq [\h]$ of the full group that generates $\h$. We can then define the action groupoid $\tilde\h:=H\ltimes \G^0$ which comes with a natural surjective groupoid homomorphism $\varphi: \tilde\h\rightarrow \h$ that satisfies  $s(\varphi(h,x))=x$ and $r(\varphi(h,x))=h x$.

We define two different actions of $\tilde\h$ on $\G$. The first being the action by conjugation $\alpha: \tilde\h \acts \G$ where $(h,r(g))\cdot g=\varphi(h,r(g))g\varphi(h,s(g))^{-1}$.  For clarity in the rest of this proof, we use $\cdot$ to denote the conjugation action $\alpha$. Fix a countable collection of measurable partial sections $\Sigma$ of the range map $r$ for the conjugation action such that $\tilde \h \cdot\sigma'\cap\tilde \h\cdot \sigma''=\varnothing$ for $\sigma'\neq\sigma''$ and $\G=\bigsqcup_\Sigma \tilde \h \cdot \sigma$. The second action will be by left translation $\lambda: \tilde\h \acts \G$ where $(h,r(g)) g=\varphi(h,r(g))g$. 

Let $A_\sigma \subseteq dom(\sigma)$ be the unique maximal (mod null) set such that $ \left(\h\cap \h^{\sigma}\right)_{A_\sigma}$ is nowhere amenable. Define $\Sigma_A=\left\{\sigma\cap s^{-1}(A_\sigma) \:|\: \sigma \in \Sigma\right\}$ and define $\K$ to be the groupoid generated by $\h\cup \Sigma_A$. Also, define $\Sigma_B=\{\sigma\setminus\K|\sigma\in\Sigma\}$ and note that $\G\setminus \K=\tilde\h\cdot \Sigma_B$ by the assumptions on $\Sigma$ and since $\K$ is invariant under the action of $\tilde \h$. If $\tau\in \Sigma_A$ then $\h\cap \h^\tau$ is nowhere amenable on $s(\tau)$ and, in particular, aperiodic on $s(\tau)$. And if $h\in H$, then it's immediate that $\h\cap \h^h=\h$ is aperiodic. So we get that $\h$ is q-normal in $\K$.

Since $\G$ is inner amenable, there is a mean $m$ on $\G$ as in ~\cref{iadef}. In particular, this mean is equidistributed with respect to the map $r$. We proceed by contradiction to show $m(\K)=1$.

Assume not. So $m(\G\setminus\K)>0$. The mean $m$ is no longer necessarily equidistributed with respect to $r:\G\setminus \K\rightarrow \G^0$. However, we can define a finite Borel measure $\mu_{\G\setminus\K}$ on $\G^0$ by $\mu_{\G\setminus \K}(A):=m(r^{-1}(A)\setminus \K)$ which is absolutely continuous with respect to $\mu^0$. 

\begin{claim}
$\mu_{\G\setminus \K}$ is countably additive. 
\end{claim}
\begin{proof}[Proof of claim]
The mean $m$ is assume to be equidistributed, so for any measurable partition $P=(A_1,\dots,A_k,\dots)$ of $\G^0$
\[1=\sum_k \mu^0(A_k)=\sum_k m(r^{-1}(A_k))\]
which implies that for measurable $D_k\subseteq r^{-1}(A_k)$
\[m(\bigsqcup_k D_k)=\sum_k m(D_k).\]
Letting $D_k=r^{-1}(A_k)\setminus \K$, we see that $\mu_{\G\setminus \K}$ is countably additive.
\end{proof}

This $\mu_{\G\setminus\K}$ and $\mu^0$ are both $\h$-conjugation invariant, so the Radon-Nikodym derivative $f:=\frac{d\mu_{\G\setminus \K}}{d\mu^0}$ is also $\h$-conjugation invariant. Additionally, $f$ is bounded above by $1$ and not almost everywhere equal to $0$ by our assumption that $m(\G\setminus\K)>0$. Therefore, we may find a small enough $\varepsilon>0$ such that the $\h$-conjugation invariant set $W^0:=\{x|f(x)>\varepsilon\}$ has $\mu^0$-positive measure. Let $W:=r^{-1}(W^0)\setminus \K$. We define a conjugation-invariant positive linear functional $m_W\in (L^\infty(r^{-1}(W^0),\mu^1))^*$ by $m_W(D):=\int_W \frac{1_D}{f}dm$. This is still in $(L^\infty)^*$ since $f$ is bounded above and below. Let $(f_k=\sum c_i^k 1_{A_i^k})$ be a non-decreasing sequence of simple functions converging to $\frac{1}{f}$ in $L^\infty$. The mean $m_W$ is now equidistributed with respect to the restricted map $r_W: W\rightarrow \G^0$, since for $A\subseteq W^0$,
\begin{align*}
    \mu^0(A)&=\int_A \frac{1}{f} d\mu_{\G\setminus\K}=\lim_k\int_A f_k d\mu_{\G\setminus \K}\\
    &=\lim_k\sum_i c_i^k\mu_{\G\setminus \K}(A_i^k\cap A)=\lim_k\sum_i c_i^k m(r_W^{-1}(A_i^k\cap A))\\
    &=\lim_k\sum_i c_i^k \int_W 1_{r_W^{-1}(A_i^k)\cap r_W^{-1}(A)} dm=\lim_k \int_W f_k 1_{r_W^{-1}(A)} dm\\
    &=\int_W \frac{1_{r_W^{-1}(A)}}{f} dm=m_W(r_W^{-1}(A))
\end{align*}
By $\h$-invariance of $\K$ and $W^0$, we have a restricted conjugation action of $\tilde \h\acts W$  which we refer to by $\tilde\alpha$. Now, define a new mean $\tilde m_W:=\frac{m_W}{m_W(W^0)}$ by renormalizing $m_W$. The mean $\tilde m_W$ satisfies the assumptions of ~\cref{eqamen} for the action $\tilde \alpha$ and, therefore, there exists a sequence $h_n$ witnessing amenability of $\tilde\alpha:\tilde \h_{W^0}\acts W$. 

The left translation action $\lambda:\tilde \h_{W^0} \acts W\subseteq \G$ is nonamenable since $\h$ is nowhere amenable. Define $\Sigma_W:=\{\tau\cap W|\tau\in \Sigma_B\}$. For $\tau \in \Sigma_W$, denote by $\tilde\h_\tau\subseteq \tilde \h_{W^0}$ the stabilizer of the section $\tau$ with respect to the action $\alpha$. Let $\lambda_{\tau} :\tilde\h_{\tau} \acts r^{-1}(r(\tau))$ be the action $\lambda$ restricted to $\tilde\h_\tau$.  If this action is nonamenable, then the action $\varphi(\tilde\h_\tau)\acts r^{-1}(r(\tau))$ by left translation is nonamenable. But then, the action $\varphi(\tilde\h_\tau)\acts \varphi(\tilde\h_\tau)$ by left translation is nonamenable. Indeed, otherwise we can use the amenability sequence from $\varphi(\tilde\h_\tau)$ to show the action $\varphi(\tilde\h_\tau)\acts r^{-1}(r(\tau))$ is amenable. So the groupoid $\varphi(\tilde\h_\tau)$ is nonamenable if the action $\lambda_{\tau}$ is nonamenable.


By ~\cref{lm1}, we know there exists a $\tau\in \Sigma_W$ such that $\varphi(\tilde\h_{\tau})$ is nonamenable. Since $\tilde\h_{\tau}$ stabilizes $\tau$ under conjugation, $\varphi(\tilde\h_{\tau})\subseteq \h^{\tau}$. So, $\h\cap \h^{\tau}\supseteq \varphi(\tilde\h_{\tau})$ is nonamenable. We may find a positive measure set $A_{\tau}$ such that $(\h\cap\h^{\tau})_{A_\tau}$ is nowhere amenable. Recall, $\tau=(\sigma\setminus s^{-1}(A_\sigma))\cap W$ for some $\sigma \in \Sigma$. The sets $A_\sigma$ and $A_\tau$ are disjoint. However, the groupoid $\left (\h\cap\h^{\sigma}\right)_{A_\sigma\cup A_{\tau}}$ is nowhere amenable which contradicts maximality of $A_\sigma$. Hence, $m(K)=1$.

The conjugated groupoid $\K^\sigma:=\left\{ k^\sigma \:|\: k\in \K\right\}$ will still have mean 1 since $m$ is conjugation invariant. So, $\K\cap \K^{\sigma_1}\cap \dotsc \cap \K^{\sigma_n}$ will have mean 1 in $\G$ for all bisections $\sigma_1,\dotsc, \sigma_n \in [\G]$. Since $m$ is diffuse and equidistributed, this means that $\K\cap \K^{\sigma_1}\cap \dotsc \cap \K^{\sigma_n}$ is aperiodic and hence $\K$ is q-normal in $\G$.
\end{proof}

Now we prove the main theorem.

\begin{thm}
Assume $\varphi: R\rightarrow \G$ is a principal groupoid extension of an inner amenable groupoid $\G$. Then $C_{\mu^0_R}(R)=1$.
\end{thm}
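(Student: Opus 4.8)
The plan is to prove the two inequalities $C_{\mu^0_R}(R)\ge 1$ and $C_{\mu^0_R}(R)\le 1$ separately, with essentially all of the content in the upper bound. First I would record that inner amenability forces aperiodicity: conditions \cref{iamean1} and \cref{iamean3} of \cref{iadef} are incompatible with the existence of a positive-measure set $A\subseteq\G^0$ for which $\G_A$ has finite $\mu^1$-measure (the former would give $m(\G_A)=\mu^0(A)>0$, the latter $m(\G_A)=0$), so almost every $s$-fiber of $\G$ is infinite. Since $\varphi$ is a measure-preserving surjection, $R$ is aperiodic as well, and the cost of an aperiodic principal p.m.p.\ groupoid is at least $1$; this gives $C_{\mu^0_R}(R)\ge 1$. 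I would then split $\G^0=U\sqcup V$ into its maximal invariant amenable part $U$ and its nowhere amenable part $V$ and treat the two pieces separately, since cost integrates over such an invariant decomposition.

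Over $U$ the restriction $\G_U$ is amenable, so $\varphi^{-1}(\G_U)$, being a principal extension of an amenable groupoid, is itself amenable by the extension proposition of \cref{eqamen}'s subsection; it is aperiodic by the first step, so by Connes--Feldman--Weiss \cite{CFW81} its cost is exactly $1$. Thus the amenable part contributes $1$ and the problem reduces to the nowhere amenable part.

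Over $V$, the groupoid $\G_V$ is inner amenable and nowhere amenable. The decisive observation is that \cref{iaimpqnorm} applies to \emph{any} nowhere amenable subgroupoid $\h\le\G_V$ with full unit space, producing $\K$ with $\h$ q-normal in $\K$ and $\K$ q-normal in $\G_V$. Pulling this tower back through $\varphi$ via \cref{qnormext} makes $\varphi^{-1}(\h)$ q-normal in $\varphi^{-1}(\K)$ and $\varphi^{-1}(\K)$ q-normal in $\varphi^{-1}(\G_V)$, all principal; two applications of \cref{qnorm} give $C(\varphi^{-1}(\G_V))\le C(\varphi^{-1}(\K))\le C(\varphi^{-1}(\h))$. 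Hence it suffices to exhibit a \emph{single} nowhere amenable $\h\le\G_V$ whose principal extension has cost $1$.

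To build such an $\h$ I would use inner amenability a second time, now through the asymptotically central behaviour encoded in the mean $m$. The idea is to start from a fixed nowhere amenable core $\h_0$ and adjoin an infinite, nontrivial family of bisections $(\sigma_n)\subseteq[\G_V]$ extracted from $m$ that approximately commute with $\h_0$, so that the overlaps $\h_0\cap\h_0^{\sigma_n}$ stay aperiodic; the "moreover" clause of \cref{iaimpqnorm}, asserting aperiodicity of all intersections $\K\cap\K^{\sigma_1}\cap\dots\cap\K^{\sigma_n}$, is exactly what guarantees this absorption is legitimate uniformly. Adjoining the $\sigma_n$ does not increase the cost of the extension (each is recycled against its aperiodic overlap), while their infinitude and asymptotic centrality drive the cost of $\varphi^{-1}(\h)$, with $\h=\langle\h_0,\sigma_n:n\rangle$, down to $1$, exactly in the spirit of Kechris's Theorem~8.1 \cite{Kec10} and of Connes--Feldman--Weiss. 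Feeding this back through the previous paragraph yields $C(\varphi^{-1}(\G_V))=1$, and combining with the amenable part gives $C_{\mu^0_R}(R)=1$. I expect this last recycling step to be the main obstacle: organizing the asymptotically central bisections into a single generating set of cost arbitrarily close to $1$ while controlling the fiberwise measure-theoretic overlaps, and checking that the aperiodicity furnished by \cref{iaimpqnorm} is uniform enough to make the absorption rigorous.
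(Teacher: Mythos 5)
Your overall skeleton matches the paper's: use \cref{iaimpqnorm} to produce a q-normal chain $\h \qnorm \K \qnorm \G$, pull it back through $\varphi$ with \cref{qnormext}, and apply \cref{qnorm} twice to reduce the cost of $R$ to the cost of $\varphi^{-1}(\h)$ for a suitable nowhere amenable $\h$. But there is a genuine gap exactly where you flag "the main obstacle": you reduce to exhibiting a nowhere amenable $\h$ whose principal extension has cost \emph{exactly} $1$, and your proposed construction of such an $\h$ --- adjoining infinitely many asymptotically central bisections extracted from the mean $m$ to a nowhere amenable core $\h_0$ and "recycling" them against aperiodic overlaps --- is not an argument; it is a restatement of the difficulty. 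Nothing in \cref{iaimpqnorm} (including the "moreover" clause) tells you how to generate such an $\h$ cheaply: the theorem takes a nowhere amenable $\h$ as \emph{input} and says nothing about its cost.

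The paper avoids this entirely by weakening the target: it suffices to produce, for each $\ve>0$, a nowhere amenable $\h$ with $C_{\mu^0_R}(\varphi^{-1}(\h))<1+\ve$. After reducing to the ergodic case, one takes a \emph{maximal ergodic amenable} subgroupoid $\h'\le\G$ (Zorn), whose pullback $\varphi^{-1}(\h')$ is an aperiodic amenable principal groupoid and hence has cost $1$, and then sets $\h:=\langle \h'\cup A\rangle$ for any $A\subseteq\G\setminus\h'$ with $0<\mu^1(A)<\ve$. Maximality forces $\h$ to be nonamenable, and ergodicity upgrades this to nowhere amenable, while $C_{\mu^0_R}(\varphi^{-1}(\h))<1+\ve$ for free. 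This $\ve$-trick is the missing idea in your write-up; the mean $m$ is used only once, inside the proof of \cref{iaimpqnorm}, not a second time to manufacture central bisections. Two smaller points: (i) your invariant decomposition $\G^0=U\sqcup V$ is workable but the paper instead passes to the ergodic decomposition, which is what makes "ergodic nonamenable $\Rightarrow$ nowhere amenable" available; (ii) you do not address the case where the associated equivalence relation $R_\G$ is finite (e.g.\ $\G$ a nonamenable inner amenable group), where there is no aperiodic amenable core inside $\G$ itself and the paper instead invokes the fixed price $1$ theorem for inner amenable groups from \cite{TD14} applied to the isotropy groups.
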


\begin{proof}
By looking at the ergodic decomposition of $\G$, it suffices to deal with the groupoid $\G$ being ergodic.
We prove this in two cases.

Assume first that the associated equivalence relation $R_\G:=\left \{(r(g),s(g))\:|\: g\in \G\right \}$ is finite. By ergodicity, $\G^0=\{x_1, \dots, x_n\}$. This means that by \cite{KiTD18}, the isotropy group $\G_{\{x_i\}}:=\{g\in \G\:|\:s(g=r(g)=x_i)\}$ is an inner amenable group. By \cite{TD14}, this has fixed price 1. Consider now the groupoid $\G':=\bigsqcup _{1\leq i \leq n} \G_{\{x_i\}}$. Any principal extension of $\G'$ is a union of exactly $n$ ergodic components each of which is a principal extension of some copy of $\G_{\{x_i\}}$ and so generated by a set of measure $1/n+\varepsilon$. So, $\G'$ has fixed price 1 and $C_{\mu^0_R}(\varphi^{-1}(\G'))=1$. By ~\cref{qnormext} and ~\cref{qnorm}, we get $C_{\mu^0_R}(R)=1$.

If the underlying equivalence relation $R_\G$ is instead aperiodic, fix $\varepsilon>0$ and define $\h'\leq \G$ as the maximal(mod null) ergodic amenable subgroupoid using Zorn's Lemma. The set of amenable ergodic subgroupoids is nonempty since $R_\G$ is aperiodic. Now, let $A\subseteq \G\setminus \h'$ with $0<\mu^1(A)<\varepsilon$. Define $\h:=\langle \h'\cup A\rangle$ so $\h$ is nonamenable. In particular, since $\h$ is ergodic nonamenable, it is nowhere amenable. Now, by ~\cref{iaimpqnorm} and ~\cref{qnormext}, there exists $\K$ such that $\varphi^{-1}(\h)$ is q-normal in $\varphi^{-1}(\K)$ and $\varphi^{-1}(\K)$ is q-normal in $R$.

Notice that $\varphi|_{\varphi^{-1}(\h')}$ is a principal groupoid extension of an amenable groupoid so $C_{\mu^0_R}(\varphi^{-1}(\h'))=1$.  Therefore, $C_{\mu^0_R}(\varphi^{-1}(\h))<1+\varepsilon$. 
Last, \cref{qnorm} gives us  
\(C_{\mu^0_R}(R)\leq C_{\mu^0_R}(\varphi^{-1}(\h))< 1+\varepsilon.\)
\end{proof}


\printbibliography

\end{document}